\numberwithin{equation}{section}
\numberwithin{figure}{section}
\theoremstyle{plain}
\newtheorem{thm}{Theorem}[section]
\newtheorem{theorem}[thm]{Theorem}
\newtheorem{prop}[thm]{Proposition}
\newtheorem{cor}[thm]{Corollary}
\newtheorem{lemma}[thm]{Lemma}
\renewcommand{\leq}{\leqslant}
\renewcommand{\geq}{\geqslant}
\renewcommand\le{\leqslant}
\renewcommand\ge{\geqslant}
\newcommand{\quadric}{\ensuremath{\mathcal{E}}}
\newcommand\largestnumber{97\relax}
\newcommand{\F}{\mathbb{F}}
\newcommand{\PG}{\mathrm{PG}}
\newcommand{\tr}{\mathrm{tr}}
\newcommand{\cone}{\mathrm{cone}}
\newcommand{\even}{\mathbb{E}}
\definecolor{dkgreen}{rgb}{0,0.6,0}
\definecolor{gray}{rgb}{0.5,0.5,0.5}
\definecolor{mauve}{rgb}{0.58,0,0.82}
\tiny\color{gray},
\title{On Bruen chains}
\author[Bamberg]{John Bamberg$^1$}
\author[Lansdown]{Jesse Lansdown$^{1,2}$}
\address{$^1$Department of Mathematics and Statistics, 
The University of Western Australia, Crawley, WA 6009, Australia.}
\author[Van de Voorde]{Geertrui Van de Voorde$^2$}
\address{$^2$School of Mathematics and Statistics, University of Canterbury, 
Christchurch, New Zealand}
\email{john.bamberg@uwa.edu.au}
\email{\{jesse.lansdown,geertrui.vandevoorde\}@canterbury.ac.nz}
\date{}
\begin{document}

\begin{abstract}
It is known that a Bruen chain of the three-dimensional projective space $\PG(3,q)$ exists for every odd prime power $q$ at most $37$, except for $q=29$. It was shown by Cardinali et. al (2005) that Bruen chains do not exist for $41\le q\leq 49$. We develop a model, based on finite fields, which allows us to extend this result to $41\le q \le \largestnumber$, thereby adding more evidence to the conjecture that Bruen chains do not exist for $q>37$. 
Furthermore, we show that Bruen chains can be realised precisely as the $(q+1)/2$-cliques of a two related, yet distinct, undirected simple graphs. 
\end{abstract}

\maketitle

\section{Introduction}

There is a functorial correspondence between translation planes of order $q^2$, with a kernel of order $q$,
and \emph{spreads} of the three-dimensional projective space $\PG(3,q)$, in the sense that isomorphic translation planes correspond to projectively equivalent spreads of $\PG(3,q)$. A spread of $\PG(3,q)$ is a set of lines forming a partition of the points, and the finite Desarguesian projective planes $\PG(2,q)$ correspond to the \emph{regular} spreads of $\PG(3,q)$. Taking a regulus inside a regular spread and replacing it with its opposite regulus yields a \emph{Hall} plane, and geometers have investigated line replacement as a method for constructing new projective planes.
Bruen \cite{Bruen1978} introduced the concept of a \emph{chain} of reguli in a regular spread of $\PG(3,q)$, $q$ odd; a
set of $(q+3)/2$ reguli that pairwise meet in 2 lines, such that every line
of $\PG(3,q)$ lies in 0 or 2 reguli of the chain. A Bruen chain is essentially the 
smallest set of reguli one needs in a partial spread replacement construction, to create a non-Desarguesian translation plane having rank 2 over its kernel, such that the set of reguli contains at least two intersecting reguli.

For every odd prime power $q$ between 5 and 37,
but apart from 29, there exists a Bruen chain, and there are 17 in total (up to semi-similarity; see Table \ref{tbl:known}). 
It was shown by Cardinali et al. \cite{CDPT} that there are no examples for $41\le q\le 49$.
We extend their result by showing the following:

\begin{theorem}\label{computationalresult}
There are no Bruen chains for $41\le q\le \largestnumber$.
\end{theorem}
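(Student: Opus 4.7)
My plan is to convert Theorem \ref{computationalresult} into a finite computation and dispatch it with a clique-finding algorithm. By the graph-theoretic characterization announced in the abstract, a Bruen chain of $\PG(3,q)$ corresponds to a $(q+1)/2$-clique of an explicit graph $G_q$ obtained by fixing one regulus $\mathcal{R}_0$ and taking as vertices the reguli meeting $\mathcal{R}_0$ in exactly two lines, with edges encoding the pairwise compatibility condition from the definition of a Bruen chain (meeting in exactly two lines and producing no third line common to two distinct reguli of the chain). Thus it suffices to verify, for each odd prime power $q$ in the range $[41,\largestnumber]$, that $G_q$ admits no clique of size $(q+1)/2$.

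The finite-field model is what makes this verification tractable. I would realise $\PG(3,q)$ via an $\F_{q^2}$-structure on $\F_q^4$, so that $\mathcal{R}_0$ is canonical (for instance the regulus of Baer sublines through a distinguished embedding $\F_q \hookrightarrow \F_{q^2}$). Its stabiliser $H$ in the full collineation group is large and has an explicit description, and the vertices of $G_q$ can be parametrised by elements of an $\F_{q^2}$-set (or equivalently by cosets involving $\mathrm{PGL}_2(q)$), with adjacency testable by a constant-time algebraic predicate rather than by expensive projective-geometry computations. Because $H$ acts on $G_q$ by graph automorphisms and hence on its cliques, one may restrict the search to cliques whose first vertex ranges over $H$-orbit representatives, shrinking the effective search space by a factor roughly equal to the orbit lengths.

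The implementation then consists of (i) generating $G_q$ and computing the $H$-orbits on $V(G_q)$; (ii) running a backtracking clique enumeration (Bron-Kerbosch with pivoting, or a specialised routine such as Cliquer) from each orbit representative, pruning any branch whose candidate set drops below $(q-1)/2$ vertices; and (iii) recording the absence of a clique of size $(q+1)/2$ for every odd prime power $q$ with $41\le q\le \largestnumber$.

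The principal obstacle is engineering rather than mathematics: $|V(G_q)|$ grows polynomially in $q$ (roughly like $q^2$), clique search is NP-hard in general, and without aggressive symmetry reduction together with a constant-time adjacency test the case $q=\largestnumber$ would be infeasible on ordinary hardware. To guard against the implementation errors to which computer-assisted nonexistence results are notoriously sensitive, I would cross-verify the code in an independent computer algebra system and confirm that it correctly reproduces both the known existence data for $q\le 37$ (recovering all 17 chains of Table \ref{tbl:known}) and the Cardinali et al.\ nonexistence data for $41\le q\le 49$ before trusting its output on the new range.
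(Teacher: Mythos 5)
Your overall strategy --- reduce the question to a clique search in an explicit graph, use a finite-field model for a constant-time adjacency test, and exploit the stabiliser of a fixed base object for symmetry reduction --- matches the shape of the paper's argument. But the graph you propose is not the paper's graph, and the difference is exactly where the mathematical content of the result lives. Your $G_q$ has as edges only the \emph{pairwise} compatibility ``the two reguli meet in exactly two lines'' (equivalently, in the polar picture, ``the two external points span an external line not through $X$''). That is precisely the adjacency relation of the graph of Cardinali et al.\ \cite[\S3]{CDPT}, which the paper explicitly contrasts with its own. The defining condition (ii) of a Bruen chain --- every line lies in $0$ or $2$ reguli, i.e.\ no \emph{three} reguli share a line --- is a condition on triples and is not captured by your edge relation. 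Consequently a $(q+1)/2$-clique of $G_q$ need not be a Bruen chain; nonexistence of such cliques would still imply nonexistence of chains, but you would either have to enumerate all large cliques and post-filter by the triple condition, or hope the clique number of this much denser graph falls below $(q+1)/2$. This is essentially why the CDPT computation stopped at $q=49$.

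The missing idea is the cone condition. The paper fixes $X$ in the chain and joins $A\sim B$ when $\cone(X)\cap\cone(A)\cap\cone(B)$ contains no point outside $\quadric$ (plus $X\notin AB$ for $\Gamma_X$). This is still a pairwise relation relative to the fixed $X$, it is necessary for $\{X,A,B\}$ to lie in a common chain (Corollary \ref{Bruenimpliescone}), it strictly refines the external-line condition (Lemma \ref{coneimpliesexternal}), and --- via Lemmas \ref{lemmacollinear} and \ref{lemmanoncollinear} --- it is strong enough that $(q+1)/2$-cliques of $\Gamma_X$ or $\Delta_X$ are \emph{exactly} Bruen chains (Theorem \ref{maintheoremextra}). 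This simultaneously makes the clique characterisation exact and sparsifies the graph enough for \texttt{mcqd} to terminate at $q=97$. A secondary divergence: the paper's finite-field model is $\F_{q^4}$ over $\F_q$ with the elliptic quadratic form $Q(x)=\tr(x^2)$, working with external points of $\quadric$ and reducing adjacency to a single quadratic-residue test (Corollary \ref{conecorollary}), rather than your $\F_{q^2}$-structure on reguli of a regular spread; the polar (external-point) picture is what makes the cone condition expressible at all. Without the cone condition your plan reproduces the earlier method and would almost certainly not be feasible over the full range $41\le q\le\largestnumber$.
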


The above result relies on computation and we provide code for setting up the computation in an Appendix.
One of our key observations is that a Bruen chain is equivalent to a $(q+1)/2$-clique in a certain undirected simple graph
$\Gamma_X$, which we describe as follows. 
Given an elliptic quadric $\quadric$
of $\PG(3,q)$,
there are two classes of external points of $\quadric$ up to isometry, so without loss of generality choose $\even$ to be one such class. Let $X\in \even$ 
and let $\mathbb{V}$ be the set of elements of $\even$ not equal to $X$ and lying on 
an external line with $X$. The set $\mathbb{V}$ is the vertices of $\Gamma_X$, and
we join two vertices $A$ and $B$ if the line $AB$ does not contain $X$ and the cones $\cone(X)$, $\cone(A)$, $\cone(B)$ intersect only in points of $\quadric$. Here, $\cone(A)$ is the union of $A$ with the set of points $Y\neq A$ such that the line $AY$ is tangent to $\quadric$.
Furthermore, we define the related graph $\Delta_X$ by removing the condition that the line $AB$ does not contain $X$. Thus $\Gamma_X$ is a spanning subgraph of $\Delta_X$.

In this paper, we show the following:
\begin{theorem}\label{maintheorem}
Cliques of size $(q+1)/2$ of $\Gamma_X$ and cliques of size $(q+1)/2$ of $\Delta_X$ are in one-to-one correspondence with Bruen chains.
\end{theorem}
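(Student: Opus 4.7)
My plan is to use the Klein correspondence to transfer the problem to geometric statements on an elliptic quadric. Under this correspondence, a regular spread of $\PG(3,q)$ corresponds to an elliptic quadric $\quadric$ in another $\PG(3,q)$, reguli in the spread biject with plane sections (conics) of $\quadric$, and secant planes biject with external points of $\quadric$ via the polarity: each external point $A$ corresponds to the regulus realized by the conic $C_A=A^\perp\cap\quadric$. Under this dictionary, a Bruen chain translates to a set $\mathcal{S}$ of $(q+3)/2$ external points such that (i) every two points of $\mathcal{S}$ lie on an external line of $\quadric$ (so that the corresponding polar planes meet in a secant line, and the two conics share exactly $2$ points), and (ii) every point of $\quadric$ lies on $0$ or $2$ of the conics $\{C_A:A\in\mathcal{S}\}$. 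I would first verify this dictionary and, via a polarity/orbit argument, check that all points of $\mathcal{S}$ lie in a single isometry orbit of external points, which we may take to be $\even$.

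The crux is the following local equivalence: for pairwise-external, non-collinear external points $X, A, B$, the Bruen triple condition $C_X\cap C_A\cap C_B=\emptyset$ is equivalent to the cone condition $\cone(X)\cap\cone(A)\cap\cone(B)\subseteq\quadric$. To see the forward direction, suppose $Q\in C_X\cap C_A\cap C_B$. Each cone $\cone(Y)$ for $Y\in\{X,A,B\}$ is a rank-$3$ quadric with vertex $Y\neq Q$, so $Q$ is smooth on it, and a direct gradient computation shows its tangent plane at $Q$ is precisely the tangent plane $T_Q$ of $\quadric$ at $Q$; hence all three cones are tangent to the same plane $T_Q$ at $Q$. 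A local intersection computation in coordinates about $Q$ then yields intersection multiplicity $4$ there, and since by Bezout the three degree-$2$ surfaces meet in $2^3=8$ points (counted with multiplicity), the residual $4$ points must lie off $\quadric$, contradicting the cone condition. The reverse direction uses the symmetric characterisation $P\in\cone(X)\cap\cone(A)\cap\cone(B)$ iff $X, A, B\in\cone(P)$ (all three lie on tangent lines from $P$), combined with Bezout, to deduce that an off-quadric triple-cone point would force a common quadric point of the three conics.

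Given the equivalence, the bijection follows. If $\mathcal{C}$ is a Bruen chain containing the regulus corresponding to $X$, its remaining $(q+1)/2$ points form a subset $\mathcal{S}'\subseteq\mathbb{V}$, and pairwise adjacency in $\Gamma_X$ follows: the line $AB$ cannot pass through $X$ (else $X,A,B$ would be collinear on an external line $\ell$, and the two points of $\ell^\perp\cap\quadric$ would lie in $C_X\cap C_A\cap C_B$, violating (ii)), and the cone condition holds by the equivalence. Conversely, a $(q+1)/2$-clique $\mathcal{S}'$ of $\Gamma_X$ extends to $\mathcal{S}=\{X\}\cup\mathcal{S}'$; the pairwise external-line condition within $\mathcal{S}'$ is derived from the cone condition (a non-external $AB$ would, via the same Bezout analysis applied to the pair $\cone(A),\cone(B)$ sharing a tangent plane at their common point on $\quadric$, force extra off-quadric triple-cone points), and condition (ii) then follows from the equivalence. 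The identification of $(q+1)/2$-cliques of $\Gamma_X$ with those of $\Delta_X$ is immediate in one direction, and in the other the "$AB$ not through $X$" condition is automatic for a $\Delta_X$-clique of this size because collinearity would force a shared conic point ruled out by the chain structure. The main obstacle is the local Bezout computation at the common quadric point of the three cones: one must verify rigorously that the three cones share the tangent plane $T_Q$ and that the local intersection multiplicity is exactly $4$, so that the residual Bezout count of $4$ is indeed forced off $\quadric$.
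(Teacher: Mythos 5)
Your central ``local equivalence'' --- that for pairwise-external, noncollinear $X,A,B$ the condition $C_X\cap C_A\cap C_B=\varnothing$ is equivalent to $\cone(X)\cap\cone(A)\cap\cone(B)\subseteq\quadric$ --- is false in the direction you need for ``Bruen chain $\Rightarrow$ clique''. Since $X,A,B$ are noncollinear, $C_X\cap C_A\cap C_B\neq\varnothing$ just says that $\langle X,A,B\rangle$ is a tangent plane, which is a codimension-one (polynomial equation) condition; by contrast, whether the three cones share an external point is a quadratic-residue condition that fails for roughly half of all secant-plane triples (Lemma \ref{conegeneral}/Corollary \ref{conecorollary}: the criterion is whether $2(\tr(\alpha)+2a)(\tr(\beta)+2b)(\tr(\alpha\beta)-ab)$ is a square). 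If your equivalence held, $\Gamma_X$ would essentially coincide with the much denser graph of Cardinali et al.\ whose adjacency is just ``$AB$ external and $X\notin AB$'', which the paper explicitly points out is very different. The true statement --- that three points \emph{of a Bruen chain} have cones meeting only in $\quadric$ (Corollary \ref{Bruenimpliescone}) --- is not local: its proof goes through Proposition \ref{coneintersectBruen}, a double count over all $(q+1)/2$ remaining chain elements and all tangent planes, so it uses the global chain structure in an essential way. Relatedly, your Bezout argument cannot work even for the direction that is true: Bezout counts intersection points over $\overline{\F_q}$, whereas the existence of a common \emph{external, $\F_q$-rational} point of the three cones is a rationality question (the residual points may simply fail to be defined over $\F_q$, or may lie on $\quadric$), which is exactly why the paper must resort to explicit discriminant/nonsquare computations in Lemmas \ref{extviafield}--\ref{conegeneral}.

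There is a second gap in the converse direction. A clique of $\Gamma_X$ only constrains triples of cones \emph{containing} $\cone(X)$, but a Bruen chain requires every triple of its conics --- including triples $C_A,C_B,C_C$ with $A,B,C\neq X$ --- to have empty intersection, equivalently that $\langle A,B,C\rangle$ is a secant plane and that the clique is a cap. Your proposal derives condition (ii) ``from the equivalence'', but even granting the equivalence this covers only the planes $\langle X,A,B\rangle$. The paper needs two separate, genuinely computational lemmas here: Lemma \ref{lemmacollinear} (three collinear clique points off a line through $X$ force one of the three cone conditions to fail) and Lemma \ref{lemmanoncollinear} (three clique points spanning a tangent plane force one of the three cone conditions through $X$ to fail), both proved by manipulating products of the square conditions. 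Neither is addressed by your outline, and neither follows from a statement about a single triple $X,A,B$.
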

We will give more information about cliques of smaller size in Theorem \ref{maintheoremextra}.

Theorem \ref{maintheorem} shows that the question of the existence of Bruen chains can be solved by finding
 the clique-number of $\Gamma_X$ or $\Delta_X$. 
In this paper we develop an efficient model for both graphs based on finite fields. This model 
allows us to compute data which shows that the clique number attains the theoretical upper bound only for the small graphs in the family, but the clique number is relatively low once the order of the graph exceeds a certain threshold.

\section{Bruen Chains as sets of points in $\PG(3,q)$}
\subsection{Using the polarity}

Let $q$ be an odd prime power, $q\ge 5$, then an alternative model for Bruen chains arises when we consider an elliptic quadric $\quadric$ of $\PG(3,q)$. Recall that every plane meets $\quadric$ in either one point, or the set of $q+1$ points of a (non-degenerate) conic; the latter set is called a \emph{conic of $\quadric$}.
A \emph{Bruen chain} is then a set $\mathcal{C}$ of $(q+3)/2$ conics of $\quadric$  such that
\begin{itemize}
\item[(i)] any two elements of $\mathcal{C}$ intersect in two points, and
\item[(ii)] any three elements of $\mathcal{C}$ have empty intersection. 
\end{itemize}

By using the polarity arising from $\quadric$, denoted by $\perp$, we can instead consider the `polar' version of a Bruen chain, which we will use instead.

\begin{lemma}\label{bruenPerp}
Let $\mathcal{C}$ be a Bruen chain of the elliptic quadric $\quadric$.
Apply the polarity arising from $\quadric$ to the planes spanning each element of $\mathcal{C}$,
to obtain $(q+3)/2$ external points $\mathcal{C}^\perp$. Then $\mathcal{B}=\mathcal{C}^\perp$ satisfies the following properties,
and conversely, any set of external points satisfying these properties is the polar image of a Bruen chain.
\begin{enumerate}[(i)]
\item Any two elements of $\mathcal{B}$ span an external line;
\item Any three elements of $\mathcal{B}$ span a secant plane.
\end{enumerate}
In particular, the set of points in $\mathcal{B}$ forms a {\em cap} (i.e. a set of points no three of which are collinear).
\end{lemma}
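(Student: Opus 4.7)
The plan is to transport everything through the polarity $\perp$ and read each property of $\mathcal{B}$ off from the corresponding property of $\mathcal{C}$. I will use three standard facts about the polarity of an elliptic quadric in $\PG(3,q)$: it is an incidence-reversing involution on subspaces; it interchanges secant and external lines (while sending tangent lines to tangent lines); and a plane is secant to $\quadric$ precisely when its pole is off $\quadric$, i.e.\ is an external point. Since each conic $C_i\in\mathcal{C}$ is cut from $\quadric$ by a secant plane $\pi_i$, its pole $P_i:=\pi_i^\perp$ is automatically external, so $\mathcal{B}=\{P_i\}$ is a set of $(q+3)/2$ external points.

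For property (i), I would take two distinct members $C_i, C_j\in\mathcal{C}$. The fact that $C_i\cap C_j = \pi_i \cap \pi_j \cap \quadric$ has two points says exactly that $\pi_i\cap \pi_j$ is a secant line, and its polar is the line $P_iP_j$, which is therefore external, as required.

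For property (ii), I would consider three members $C_i,C_j,C_k$ and analyse $L := \pi_i\cap \pi_j\cap \pi_k$. A dimension count guarantees that $L$ is nonempty. If $L$ were a line, then in particular $L=\pi_i\cap \pi_j$, which is a secant line; the two points of $C_i\cap C_j$ would then lie on all three planes and hence in all three conics, contradicting (ii). Hence $L=\{Q\}$ for a single point $Q$, and dualising yields that $P_i,P_j,P_k$ span the plane $Q^\perp$. Finally, if $Q$ were on $\quadric$ then $Q\in C_i\cap C_j\cap C_k$, again contradicting (ii); so $Q\notin\quadric$, forcing $Q^\perp$ to be a secant plane. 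The cap conclusion is then immediate, since three collinear points cannot span a plane at all.

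The converse runs each of the above implications in reverse: the polar of an external point is a secant plane, the polar of an external line is a secant line, and the polar of the secant plane spanned by three (necessarily non-collinear) points of $\mathcal{B}$ is a point $Q$ off $\quadric$, whence $C_i\cap C_j\cap C_k \subseteq \{Q\}\cap\quadric=\emptyset$. The one step that requires genuine care is the argument ruling out that $\pi_i\cap \pi_j\cap \pi_k$ collapses to a line; this is the only place where conditions (i) and (ii) are used in combination, and I expect it to be the main obstacle in a fully rigorous write-up.
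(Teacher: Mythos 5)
Your proof is correct and follows essentially the same route as the paper: dualise everything through the polarity of $\quadric$, with part (i) handled identically. The only difference is cosmetic and sits in part (ii), where the paper phrases the intermediate condition as ``no tangent plane contains three points of $\mathcal{B}$'' and excludes collinear triples via the two tangent planes through an external line, whereas you analyse the common point of the three secant planes and exclude the degenerate (line) case via the two points of $C_i\cap C_j$ --- these are polar-dual renderings of the same argument, and the step you flagged as delicate goes through exactly as you sketched.
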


\begin{proof}
Suppose two conics $C$ and $C'$ of $\quadric$ meet in two points $A$ and $B$. 
Then the polar images of the secant planes $\langle C\rangle$
and $\langle C'\rangle$, are external points $X$ and $X'$. The line $AB$
is a secant line, and so its polar image, $XX'$ is an external line. So the equivalent condition to 
the first property of a Bruen chain is that the two external points $X$ and $X'$ of $\mathcal{B}$ span
an external line.

If a tangent plane $X^\perp$ to $\quadric$ contains the external points $\langle C\rangle^\perp$, $\langle C'\rangle^\perp$, $\langle C''\rangle^\perp$, then the point $X\in \quadric$ is contained in $\langle C\rangle\cap \langle C'\rangle\cap \langle C''\rangle$ and vice versa. Hence, condition (ii) that three conics of a Bruen chain $\mathcal{C}$ intersect trivially translates into the condition that no tangent plane to $\quadric$ contains three points of $\mathcal{B}$.
Finally, note that the condition that no tangent plane contains three points of $\mathcal{B}$ is equivalent to condition (ii) in the Lemma: if three elements of $\mathcal{B}$ are collinear, then they are contained in two tangent planes to $\quadric$. Hence, if the three points are not contained in a tangent plane, they span a plane which is a secant plane.
\end{proof}

Lemma \ref{bruenPerp} shows that no tangent plane contains more than two points of a Bruen chain (seen as external points to $\quadric$). The following lemma sharpens that result.

\begin{lemma}\label{tangentplanes}
Every tangent plane to $\quadric$ intersects a Bruen chain (as external points to $\quadric$) in 0 or 2 elements.
\end{lemma}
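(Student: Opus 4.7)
The plan is a short double count of incidences between points of $\mathcal{B}$ and tangent planes of $\quadric$, working with $\mathcal{B}$ in its polar form supplied by Lemma \ref{bruenPerp}. Let $n_i$ denote the number of tangent planes meeting $\mathcal{B}$ in exactly $i$ points. Since any three points of $\mathcal{B}$ span a secant plane by Lemma \ref{bruenPerp}(ii), I already have $n_i = 0$ for $i \ge 3$, and the task reduces to showing $n_1 = 0$.

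First I would count flags $(X,\Pi)$ with $X \in \mathcal{B}$ and $\Pi$ a tangent plane through $X$. A tangent plane through an external point $X$ has the form $P^\perp$ where $P$ ranges over the conic $X^\perp \cap \quadric$, so there are exactly $q+1$ such planes, giving
\[
n_1 + 2n_2 \;=\; |\mathcal{B}|(q+1) \;=\; \tfrac{(q+1)(q+3)}{2}.
\]
Next, by Lemma \ref{bruenPerp}(i), every pair of points of $\mathcal{B}$ spans an external line, and I would invoke the fact that every external line $L$ of an elliptic quadric lies in exactly two tangent planes. A quick proof is to count $|\quadric|$ through the $q+1$ planes containing $L$: if $t$ and $s$ are the numbers of tangent and secant planes through $L$, then $t + s = q+1$ and $t + (q+1)s = q^2+1$, forcing $t=2$. (Equivalently, the polar of an external line is a secant line, namely $L^\perp$ meets $\quadric$ in the two points $P_1, P_2$ whose tangent planes $P_1^\perp, P_2^\perp$ contain $L$.) Thus each pair of $\mathcal{B}$ is contained in exactly two tangent planes together, yielding $n_2 = 2\binom{|\mathcal{B}|}{2} = (q+1)(q+3)/4$.

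Substituting this value of $n_2$ into the first identity forces $n_1 = 0$, which combined with $n_i = 0$ for $i \ge 3$ proves the lemma. The argument is essentially pure incidence counting, so there is no serious obstacle; the only non-routine ingredient is the count of tangent planes through an external line, which is standard and in any case handled by the internal double count sketched above.
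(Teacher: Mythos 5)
Your proof is correct and follows essentially the same route as the paper: a double count of incidences between chain points and tangent planes, resting on Lemma \ref{bruenPerp} together with the standard fact that an external line lies on exactly two tangent planes. The only difference is that you count globally over all tangent planes (solving for $n_1$ and $n_2$), whereas the paper localises the same count to the $q+1$ tangent planes through one fixed point of the chain; both versions close the argument immediately.
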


\begin{proof}
Let $\mathcal{B}$ be a Bruen chain.
First, by Lemma \ref{bruenPerp}(ii), every tangent plane meets $\mathcal{B}$ in
at most 2 points. Let $X\in \mathcal{B}$, and consider the set of tangent planes incident
with $X$. There are $q+1$ such tangent planes, and for every $Y\in\mathcal{B}\backslash\{X\}$,
there are precisely two tangent planes incident with $XY$ (because $XY$ is an external line).
Since $|\mathcal{B}\backslash\{X\}|=(q+1)/2$, it follows by direct double-counting, that every tangent plane on $X$ intersects $\mathcal{B}\backslash\{X\}$ in precisely 1 element.
The result then follows.
\end{proof}

There are two isometry classes of external points (according to the quadratic form's value being square or nonsquare respectively), and Heden \cite{Heden95} showed that a Bruen chain is contained entirely within one isometry class of external points. We will give a short proof of this fact at the end of this section (see Corollary \ref{Heden_main}).
Moreover, under the full group of semi-similarities, the two isometry classes may be interchanged and so there is a group element mapping a Bruen chain on external points from one isometry class to another. As a result, a Bruen chain exists in one isometry class on external points if and only if a Bruen chain exists in both classes.

\subsection{A finite field model}\label{sect:model}

We will present a model of the elliptic quadric that gives an alternative
algebraic characterisation of Bruen chains, in a similar vein to Section 4 of \cite{Heden95}.
Consider the finite field $\F_{q^4}$ as a vector space over its subfield $\F_q$.
Let $\tr$ be the relative trace map from $\F_{q^4}$ to $\F_q$, that is, the $\F_q$-linear map $\tr(x)=x+x^q+x^{q^2}+x^{q^3}$. The mapping $Q(x)=\tr(x^2)$ is a quadratic form on $\F_{q^4}$ of elliptic type\footnote{One way to see 
that the form is of elliptic type
is to notice that $Q$ is the `field-reduction' of the quadratic form $x\mapsto x^2$, from $\F_{q^4}$ to $\F_{q^4}$. The latter form
has no nonzero singular vectors, and so is elliptic.}.
The elliptic quadric is determined by the elements of $\F_{q^4}^*$ such that
$Q(x)=0$, where we consider elements up to scalars in $\F_q$. We will denote the  point in $\PG(3,q)$ defined by the element $x\in\F_{q^4}^*$ by $\langle x\rangle$. By abuse of notation, the line (resp. plane) spanned by $\langle \delta\rangle,\langle\epsilon\rangle$ (resp. $\langle \delta\rangle,\langle\epsilon\rangle, \langle \gamma\rangle$) is denoted by $\langle \delta,\epsilon\rangle$ (resp. $\langle \delta,\epsilon,\gamma\rangle$.
An isometry $g$ for $Q$ is an invertible $\F_q$-linear map on $\F_{q^4}$
such that $Q(x^g)=Q(x)$ for all $x\in\F_{q^4}$. The group of isometries 
for $Q$ has three orbits on points $\langle x\rangle$ in $\PG(3,q)$: the singular points, `even' external points,
and `odd' external points. Accordingly, these orbits are characterised by the value of $Q(x)$
being zero, nonzero square, or nonsquare in $\F_q$. Note that this notion is well-defined: since $\langle x\rangle=\langle y\rangle$ if and only if $y=\lambda x$ for some $\lambda\in \F_q$, we have that $Q(x)$ is a (non-)square if and only if $Q(y)$ is a (non-)square. 
We say that points in the same orbit have the same \emph{isometry type}.

\begin{lemma}\label{extviafield}
Two external points $\langle\alpha\rangle$ and $\langle\beta\rangle$ in $\PG(3,q)$ span an external line if and only if $\tr(\alpha\beta)^2-\tr(\alpha^2)\tr(\beta^2)$ is a nonsquare. They span a tangent line if and only if $\tr(\alpha\beta)^2-\tr(\alpha^2)\tr(\beta^2)=0$.
\end{lemma}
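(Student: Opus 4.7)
The plan is to parametrise the line $\langle \alpha, \beta \rangle$ and count how many of its points lie on $\quadric$ by reducing to counting roots of a quadratic over $\F_q$. The $q+1$ points of the line are $\langle \beta \rangle$ together with $\langle \alpha + t\beta \rangle$ for $t$ ranging over $\F_q$. Since $\langle \beta \rangle$ is external, $Q(\beta) = \tr(\beta^2) \neq 0$, so $\langle \beta \rangle$ itself is off $\quadric$ and does not contribute to the count. Expanding via the polar form of $Q$ (using that $\tr$ is $\F_q$-linear and $(\alpha+t\beta)^2 = \alpha^2 + 2t\alpha\beta + t^2\beta^2$), one obtains
\[
Q(\alpha + t\beta) = \tr(\alpha^2) + 2t\,\tr(\alpha\beta) + t^2\,\tr(\beta^2),
\]
which is a genuine quadratic in $t$ because $\tr(\beta^2) \neq 0$.

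From here I would read off the number of points of the line on $\quadric$ as the number of roots in $\F_q$ of this quadratic, whose discriminant is $4\bigl(\tr(\alpha\beta)^2 - \tr(\alpha^2)\tr(\beta^2)\bigr)$. Over $\F_q$ with $q$ odd, a quadratic has $0$, $1$, or $2$ roots according as the discriminant is a nonsquare, zero, or a nonzero square, and since $4$ is a nonzero square, the same trichotomy applies to $D := \tr(\alpha\beta)^2 - \tr(\alpha^2)\tr(\beta^2)$. Combined with the definition that the line is external, tangent, or secant precisely when it has $0$, $1$, or $2$ points on $\quadric$, both claims of the lemma follow immediately.

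I do not anticipate any serious obstacle. The only minor care needed is to remember that the parametrisation $\{\langle \alpha + t\beta \rangle : t \in \F_q\}$ omits one point of the line, namely $\langle \beta \rangle$; but by the external hypothesis this excluded point lies off $\quadric$, so the reduction to the quadratic in $t$ still captures the entire incidence count of the line with $\quadric$.
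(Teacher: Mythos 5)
Your proposal is correct and follows essentially the same route as the paper: parametrise the points of the line off $\langle\beta\rangle$ as $\langle\alpha+\lambda\beta\rangle$, expand $\tr((\alpha+\lambda\beta)^2)$ into a quadratic in $\lambda$, and read off the intersection count from the discriminant $\tr(\alpha\beta)^2-\tr(\alpha^2)\tr(\beta^2)$. Your added remarks --- that the omitted point $\langle\beta\rangle$ is off $\quadric$ because it is external, and that $\tr(\beta^2)\neq 0$ makes the quadratic genuine --- are correct refinements that the paper leaves implicit.
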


\begin{proof}
A point lying on the line spanning $\langle\alpha\rangle$ and $\langle\beta\rangle$, and different from $\langle\beta\rangle$, is of the form
$\langle\alpha+\lambda\beta\rangle$ for some $\lambda\in \F_q$. This point lies in the quadric
if and only if $\tr\left( (\alpha+\lambda \beta)^2 \right)=0$. Equivalently, 
$\tr(\alpha^2)+2\lambda \tr(\alpha\beta)+\lambda^2 \tr(\beta^2)=0$ which is a quadratic equation in $\lambda$.
Since $q$ is odd, this equation has zero solutions if the discriminant 
$\tr(\alpha\beta)^2-\tr(\alpha^2)\tr(\beta^2)$ is a nonsquare, and exactly one solution if the discriminant is zero. The result follows.
\end{proof}

\begin{cor}\label{tangenttype} 
Two external points $\langle\alpha\rangle$ and $\langle\beta\rangle$, contained in a tangent line $\ell$ to $\quadric$, have the same isometry type.
\end{cor}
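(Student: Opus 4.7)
The plan is to read off the result directly from Lemma \ref{extviafield} and the multiplicative structure of squares in $\F_q^*$. Since $\ell = \langle \alpha, \beta\rangle$ is a tangent line, Lemma \ref{extviafield} gives $\tr(\alpha\beta)^2 - \tr(\alpha^2)\tr(\beta^2) = 0$, i.e.,
\[
\tr(\alpha^2)\,\tr(\beta^2) \;=\; \tr(\alpha\beta)^2.
\]
The right-hand side is a square in $\F_q$, so the product $\tr(\alpha^2)\tr(\beta^2)$ is a square.

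Next I would note that $\tr(\alpha^2)$ and $\tr(\beta^2)$ are both nonzero, since $\langle\alpha\rangle$ and $\langle\beta\rangle$ are external (hence non-singular) points of $\quadric$. In particular the square $\tr(\alpha\beta)^2$ is nonzero as well, so the identity above is an equality of nonzero elements of $\F_q^*$. In $\F_q^*$ with $q$ odd, the squares form an index-two subgroup; hence a product of two nonzero elements is a square if and only if both factors are squares or both are nonsquares. Applied to $\tr(\alpha^2)\tr(\beta^2)$, this forces $\tr(\alpha^2)$ and $\tr(\beta^2)$ to lie in the same square class, and by the isometry-type classification recalled just before Lemma \ref{extviafield} this means $\langle\alpha\rangle$ and $\langle\beta\rangle$ belong to the same isometry class of external points.

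There is essentially no obstacle here; the only thing to verify carefully is that the factors on the left are nonzero, which is exactly the statement that the two points are external (not on $\quadric$). Once that is noted, the corollary follows from the one-line algebraic identity together with the standard fact about the square class of a product in $\F_q^*$.
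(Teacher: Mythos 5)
Your proof is correct and follows essentially the same route as the paper: apply Lemma \ref{extviafield} to get $\tr(\alpha\beta)^2=\tr(\alpha^2)\tr(\beta^2)$ and conclude from the square class of the product that $\tr(\alpha^2)$ and $\tr(\beta^2)$ are both squares or both nonsquares. Your explicit check that the factors are nonzero (so the square on the right is nonzero) is a detail the paper leaves implicit, but the argument is the same.
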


\begin{proof} The points $\langle\alpha\rangle$ and $\langle\beta\rangle$ span a tangent line if and only if $\tr(\alpha\beta)^2=\tr(\alpha^2)\tr(\beta^2)$. Hence, $\tr(\alpha^2)$ and $\tr(\beta^2)$ are both squares or both nonsquares.
\end{proof}

\begin{lemma}\label{tangentplane}
Let $\gamma,\delta,\epsilon$ be elements of $\F_{q^4}$ such that $\gamma,\delta,\epsilon$ are $\F_q$-independent 
and such that $\tr(\gamma^2)\tr(\delta^2)\tr(\epsilon^2)\neq 0.$
Then $\langle\gamma\rangle,\langle\delta\rangle,\langle\epsilon\rangle$ define a tangent plane to $\quadric$
if and only if
 \[\tr(\gamma^2)\tr(\delta^2)\tr(\epsilon^2)-\tr(\gamma^2)\tr(\delta \epsilon)^2-\tr(\delta^2)\tr(\epsilon\gamma)^2-\tr(\epsilon^2)\tr(\gamma \delta)^2+2\tr(\gamma \delta)\tr(\delta \epsilon)\tr(\epsilon\gamma)=0.\]
\end{lemma}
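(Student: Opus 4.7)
The plan is to reinterpret the displayed polynomial as the determinant of the Gram matrix of $Q$ restricted to the $3$-dimensional $\F_q$-subspace $W=\langle \gamma,\delta,\epsilon\rangle_{\F_q}$, and then to argue that a plane in $\PG(3,q)$ is tangent to $\quadric$ precisely when this restriction is degenerate.

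First I would write down the Gram matrix of the polar form $B(x,y)=\tr((x+y)^2)-\tr(x^2)-\tr(y^2)=2\tr(xy)$ of $Q$ with respect to the basis $(\gamma,\delta,\epsilon)$ of $W$. Up to the harmless factor of $2$ (which is invertible since $q$ is odd), this matrix is
\[
M \;=\; \begin{pmatrix} \tr(\gamma^2) & \tr(\gamma\delta) & \tr(\gamma\epsilon) \\ \tr(\gamma\delta) & \tr(\delta^2) & \tr(\delta\epsilon) \\ \tr(\gamma\epsilon) & \tr(\delta\epsilon) & \tr(\epsilon^2) \end{pmatrix}.
\]
Hence $Q|_{W}$ is degenerate if and only if $\det(M)=0$. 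Expanding this $3\times 3$ determinant by cofactors along the first row (or Sarrus's rule) gives exactly the expression stated in the lemma, since the two off-diagonal contributions $\tr(\gamma\delta)\tr(\delta\epsilon)\tr(\gamma\epsilon)$ combine to $2\tr(\gamma\delta)\tr(\delta\epsilon)\tr(\gamma\epsilon)$. Thus the claim reduces to showing that the plane $\pi=\langle\gamma,\delta,\epsilon\rangle$ is tangent to $\quadric$ if and only if $\det(M)=0$.

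For that equivalence, I would use the dichotomy recalled at the start of Section~2: every plane of $\PG(3,q)$ meets the elliptic quadric $\quadric$ either in a conic or in a single point. If $\det(M)\neq 0$, then $Q|_W$ is a non-degenerate ternary quadratic form over $\F_q$; such a form is always isotropic (the associated projective quadric has exactly $q+1$ points), so $\pi$ meets $\quadric$ in a conic and is secant. Conversely, if $\det(M)=0$, then $Q|_W$ has a non-trivial radical; the zero set of $Q|_W$ in $\PG(W)$ is therefore either a single projective point or a pair of projective lines, according as the rank-$2$ part is anisotropic or hyperbolic. The latter possibility would produce a line of singular points on $\quadric$, contradicting the fact that the elliptic quadric has Witt index~$1$. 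Hence $\pi\cap\quadric$ is a single point, and $\pi$ is a tangent plane.

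The calculation of $\det(M)$ is entirely routine; the one subtle point is the case analysis in the degenerate case, where one must invoke the Witt index of $\quadric$ to rule out a pair of lines. The hypothesis $\tr(\gamma^2)\tr(\delta^2)\tr(\epsilon^2)\neq 0$ does not enter the argument in an essential way — it simply reflects the intended application of the lemma to triples of external points.
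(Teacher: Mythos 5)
Your proof is correct, but it takes a genuinely different route from the paper's. The paper proves the lemma by brute force: it parametrises the putative tangency point as $\langle\gamma+\lambda_1\delta+\lambda_2\epsilon\rangle$, demands that the resulting quadratic in $\lambda_1$ have a unique root, and then demands that the discriminant-of-the-discriminant (a quadratic in $\lambda_2$) vanish as well; the displayed polynomial emerges as that iterated discriminant, and the hypothesis $\tr(\gamma^2)\tr(\delta^2)\tr(\epsilon^2)\neq 0$ is genuinely used there (to ensure the equations really are quadratic and to divide out the factor $\tr(\delta^2)$ at the end). You instead recognise the expression as $\det M$ for the Gram matrix of $\tr(xy)$ restricted to $W=\langle\gamma,\delta,\epsilon\rangle_{\F_q}$, and reduce the lemma to ``$\pi$ is tangent iff $Q|_W$ is degenerate,'' which you settle by the standard facts that a nondegenerate ternary form over $\F_q$ is isotropic with $q+1$ projective zeros and that an elliptic quadric carries no lines. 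This is cleaner and conceptually more robust: as you note, it shows the nonvanishing hypothesis is superfluous, and it avoids the paper's relabelling step (``assume the tangent point is not on $\langle\delta,\epsilon\rangle$''). One small point you should make explicit: your dichotomy ``single point or pair of lines'' presupposes that the radical of $B|_W$ is exactly one-dimensional; this follows because $\mathrm{rad}(B|_W)=W\cap W^\perp$ and $W^\perp$ is a line of $\F_{q^4}$ by nondegeneracy of the ambient form (alternatively, a larger radical would again force a line inside $\quadric$, excluded by the same Witt-index argument). Indeed, the shortest version of your degenerate case is simply: $\det M=0$ iff $W^\perp\subseteq W$ iff the plane $\pi$ contains its pole, which for a nondegenerate quadric in odd characteristic happens exactly when $\pi$ is tangent.
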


\begin{proof} Suppose that $\langle\gamma\rangle,\langle\delta\rangle,\langle\epsilon\rangle$ determine a tangent plane $\pi$ to $\quadric$. Without loss of generality (possibly relabeling $\delta,\epsilon,\gamma$), we may assume that the tangent point $\pi\cap\quadric$ is not contained in the line $\langle\delta,\epsilon\rangle$, so it is of the form
$\langle\gamma+\lambda_1\delta+\lambda_2\epsilon\rangle$ for some $\lambda_1,\lambda_2\in \F_q$. This point lies in $\quadric$
if and only if $\tr\left( (\gamma+\lambda_1\delta+\lambda_2\epsilon)^2 \right)=0$. Equivalently, 
\[
\tr(\gamma^2)+2 \tr(\delta \gamma) \lambda_1 + \tr(\delta^2) \lambda_1^2 + 
 2 \tr(\epsilon \gamma) \lambda_2 + 2 \tr(\delta \epsilon) \lambda_1 \lambda_2 + 
 \tr(\epsilon^2)\lambda_2^2=0
\]
which is a quadratic equation in $\lambda_1$ since $\tr(\delta^2)\neq 0$.
The discriminant is
\[
D:=4 \left(-\tr(\delta^2) \left(\lambda_2^2 \tr(\epsilon^2)+2 \lambda_2 \tr(\epsilon \gamma)+\tr(\gamma^2)\right)+2 \lambda_2 \tr(\delta \epsilon) \tr(\delta \gamma)+\lambda_2^2 \tr(\delta \epsilon)^2+\tr(\delta \gamma)^2\right).
\]
In order for there to be a unique solution in $\lambda_1$, we require $D$ to be 0. This yields
a quadratic equation in $\lambda_2$ since $\tr({\epsilon^2})\ne 0$, whose discriminant is
\[
64 \tr(\delta^2)\left(-\tr(\delta^2) \tr(\epsilon^2) \tr(\gamma^2)+\tr(\epsilon^2) \tr(\delta \gamma)^2+
\tr(\gamma^2) \tr(\delta \epsilon)^2+\tr(\delta^2) \tr(\epsilon \gamma)^2-2 \tr(\delta \epsilon) \tr(\delta \gamma) \tr(\epsilon \gamma)\right).
\]
This discriminant is also required to be zero, which is, since $\tr(\delta^2)\neq 0$, equivalent to the statement given in the theorem. Vice versa, we see that if the condition of the theorem holds, there is exactly one point of $\quadric\cap \pi$, not contained on the line spanned by $\delta$ and $\epsilon$. Since $\pi$  either meets $\quadric$ in a conic or in one point, and $q>2$, it follows that $|\pi\cap \quadric|=1$.
\end{proof}

Denoting the polarity associated to $\quadric$ by $\perp$, we see that $\cone(A)$ is the cone with vertex $A$ and base the conic $A^\perp\cap\quadric$. We also call $\cone(A)$ the cone \emph{defined by $A$}.
Take $\mathbb{E}$ to be the set of even points and recall that the isometry group acts transitively on the points of $\mathbb{E}$. 
Since $\tr(1^2)=4$ is a square, $1\in \mathbb{E}$, and  in what follows, we will take $X=\langle 1\rangle$.

\begin{lemma} \label{conegeneral} Let $X=\langle 1\rangle$ and let $\delta,\epsilon$ be elements of $\F_{q^4}$ such that $1,\delta,\epsilon$ are $\F_q$-independent and $\tr(\delta)\tr(\epsilon)\neq 0$. Let
\[f(x,y)=\tr(y)x-\tr(x)y.\]
Let $\Omega$ be the set of  points, not contained in $\quadric$, that are contained in the intersection of the cone defined by $X$ and the planes $\langle \delta\rangle^\perp:\tr(\delta x)=0$ and $\langle\epsilon\rangle^\perp:\tr(\epsilon x)=0$.
Then $|\Omega|\geq 1$ if $-\tr(f(\delta,\epsilon)^2)$ is zero or a nonsquare.
More precisely:
\begin{itemize}
\item[(i)]
If $\langle 1, \delta, \epsilon\rangle$ is not a tangent plane to $\quadric$, then $|\Omega|=2$ if 
$-\tr(f(\delta,\epsilon)^2)$ is a nonsquare; $|\Omega|=1$ if $\tr(f(\delta,\epsilon)^2)=0$, and $|\Omega|=0$ if $-\tr(f(\delta,\epsilon)^2)$ is a non-zero square.

\item[(ii)] If $\langle 1, \delta, \epsilon\rangle$ is a tangent plane to $\quadric$, then $|\Omega|=1$. 
\end{itemize}

\end{lemma}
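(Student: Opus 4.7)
The plan is to transform the problem into computing roots of a one-variable polynomial in $\F_q$ using the finite-field model. First, I would use the decomposition $\F_{q^4} = \langle 1 \rangle \oplus X^\perp$ to write any element as $z = \lambda + y$ with $\lambda = \tr(z)/4$ and $y \in X^\perp$. Then $z \in \cone(X)$ simplifies to $\tr(y^2) = 0$, while $z \in \quadric$ simplifies to $4\lambda^2 + \tr(y^2) = 0$; hence $z \in \Omega$ iff $\lambda \neq 0$ and $\tr(y^2) = 0$. The unique projective point of $L := \langle \delta\rangle^\perp \cap \langle \epsilon \rangle^\perp$ with $\lambda = 0$ is the point $Q_0$ spanning $\langle 1, \delta, \epsilon\rangle^\perp$, and $Q_0 \in \quadric$ if and only if $\langle 1, \delta, \epsilon\rangle$ is tangent to $\quadric$; this dichotomy distinguishes cases (i) and (ii). The remaining $q$ points of $L$ take the form $z = 1 + y_0 + sQ_0$ for $s \in \F_q$, where $y_0 \in X^\perp$ is a particular solution of $\tr(\delta y_0) = -\tr(\delta)$ and $\tr(\epsilon y_0) = -\tr(\epsilon)$, and the cone condition then becomes $\tr(y_0^2) + 2s\tr(y_0 Q_0) + s^2\tr(Q_0^2) = 0$.

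For case (i), $\tr(Q_0^2) \neq 0$, so the equation is a genuine quadratic in $s$. I would choose $y_0 = a\tilde\delta + b\tilde\epsilon$, where $\tilde\delta = \delta - \tr(\delta)/4$ and $\tilde\epsilon = \epsilon - \tr(\epsilon)/4$ are the projections of $\delta,\epsilon$ onto $X^\perp$; this kills the cross term because $\tr(\tilde\delta Q_0) = \tr(\tilde\epsilon Q_0) = 0$. Solving the resulting $2 \times 2$ Gram system for $(a, b)$ yields $\tr(y_0^2) = \tr(f(\delta,\epsilon)^2)/\Delta$, where $\Delta$ is the Gram determinant of $\{\tilde\delta, \tilde\epsilon\}$ under $\tr(xy)$, so the discriminant of the quadratic is proportional to $-\tr(Q_0^2)\tr(f(\delta,\epsilon)^2)/\Delta$. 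The essential arithmetic input is that the trace form $\tr(xy)$ on $\F_{q^4}$ has nonsquare discriminant, since $Q$ is of elliptic type on $\F_q^4$; computing the full Gram determinant in the block basis $\{1, \delta, \epsilon, Q_0\}$ gives $\tr(Q_0^2) \cdot \det G_3 \equiv \text{nonsquare} \pmod{(\F_q^*)^2}$, and a short computation gives $\det G_3 = 4\Delta$, so the product $\tr(Q_0^2) \cdot \Delta$ itself is a nonsquare. The discriminant therefore has the opposite square-class from $-\tr(f(\delta,\epsilon)^2)$, and counting roots yields the stated values of $|\Omega|$.

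In case (ii), $\tr(Q_0^2) = 0$ and the $s$-equation is linear. Since now $Q_0 \in \langle \tilde\delta, \tilde\epsilon \rangle$, I would take $y_0$ of the form $a \tilde\delta + b \tilde\epsilon + c\gamma$ for some $\gamma \in X^\perp \setminus \langle \tilde\delta, \tilde\epsilon \rangle$ with $\tr(Q_0 \gamma) \neq 0$; a direct computation then gives $\tr(y_0 Q_0) = -\bigl(p\tr(\delta) + q\tr(\epsilon)\bigr)$, where one writes $Q_0 = p\delta + q\epsilon + r$ with $r = -(p\tr(\delta) + q\tr(\epsilon))/4$. When this expression is nonzero, the linear equation has a unique root and $|\Omega| = 1$ as claimed. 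The main obstacle I anticipate is the degenerate sub-case $p\tr(\delta) + q\tr(\epsilon) = 0$, equivalently $Q_0 \in \langle \delta, \epsilon \rangle$, in which $L$ lies entirely in the tangent plane $Q_0^\perp$ and is itself a tangent line to $\quadric$ at $Q_0$; this configuration requires separate geometric analysis, either to rule it out under the stated hypotheses or to reconcile the count directly.
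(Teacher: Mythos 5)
Your case (i) is correct and takes a genuinely different route from the paper. The paper argues synthetically: it projects the line $\ell=\langle\delta\rangle^\perp\cap\langle\epsilon\rangle^\perp$ from $X$ onto the plane $X^\perp$ and classifies the projected line as external, tangent or secant to the conic $X^\perp\cap\quadric$, converting each alternative into a trace condition via the polarity. You instead parametrise $\ell$ directly, kill the cross term by placing $y_0$ in $\langle\tilde\delta,\tilde\epsilon\rangle$, and settle the square class of $\tr(Q_0^2)\Delta$ from the Gram determinant of the elliptic trace form in the block basis $\{1,\delta,\epsilon,Q_0\}$ --- a clean computation that replaces the paper's case analysis. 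You should still record that in case (i) the restriction of the form to the secant plane $\langle 1,\delta,\epsilon\rangle$ is nondegenerate, so $Q_0\notin\langle 1,\delta,\epsilon\rangle$, the block basis exists, and $\Delta\neq 0$ comes for free.

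The obstacle you flag in case (ii) is a genuine gap, and it cannot be closed in either of the ways you suggest. The sub-case $Q_0\in\langle\delta,\epsilon\rangle$ is exactly the statement that $\langle\delta,\epsilon\rangle$ is a tangent line to $\quadric$ at $Q_0$, whence $\ell=\langle\delta,\epsilon\rangle^\perp$ is also tangent at $Q_0$. This is \emph{not} excluded by the hypotheses: take any point $Q_0$ of the conic $X^\perp\cap\quadric$ and any tangent line $t$ at $Q_0$ with $X\notin t$ and $t\not\subseteq X^\perp$ (there are $q-1$ such lines), and choose $\delta,\epsilon$ spanning $t$ with nonzero traces; then $\langle 1,\delta,\epsilon\rangle=\langle X,t\rangle=Q_0^\perp$ is a tangent plane satisfying all the stated conditions. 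In this configuration every point $z$ of $\ell\setminus\{Q_0\}$ spans with $X$ a line inside the tangent plane $Q_0^\perp$ that misses $Q_0$, hence an external line, so $\ell\cap\cone(X)=\{Q_0\}\subseteq\quadric$ and $|\Omega|=0$, not $1$. (In your parametrisation the $s$-equation degenerates to the constant condition $\tr(y_0^2)=0$, which must fail since a quadratic cone with vertex $X$ contains no line avoiding $X$.) So the count cannot be ``reconciled''; case (ii) is simply false in this sub-case and needs the extra hypothesis that $\langle\delta,\epsilon\rangle$ is not a tangent line, equivalently $\tr(f(\delta,\epsilon)^2)\neq 0$. For what it is worth, the paper's own proof of (ii) glides over exactly this point: it asserts that the tangency of the plane forces $\ell^\perp$ to be external to $\quadric$ --- which is precisely the exclusion of your degenerate sub-case --- and the subsequent contradiction argument ruling out the tangency of $m$ depends on the nonsquareness deduced from that unjustified assertion.
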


\begin{proof} Since $\delta$ and $\epsilon$ are $\F_q$-independent, the points $x$ satisfying $\tr(\delta x)=0=\tr(\epsilon x)$ determine a line $\ell$ in $\PG(3,q)$.
The intersection points of the cone defined by $X$ and the two planes determined by $\tr(\delta x)=0$ and $\tr(\epsilon x)=0$ are precisely the points of $\ell$ contained in the cone of $X$. It follows from Lemma \ref{extviafield} that the cone defined by $X$ has equation 
$$\tr(x)^2-4\tr(x^2)=0.$$
Since $\tr(\delta)\tr(\epsilon)\neq 0$, $\ell$ does not contain $X$.
Furthermore, $\ell$ does not lie in $X^\perp$: suppose $\ell\subseteq X^\perp$, then $X$ lies in $\ell^\perp$, the line spanned by $\langle \delta\rangle$ and $\langle \epsilon\rangle$. But this would imply that $1,\delta,\epsilon$ are $\F_q$-dependent.
Let $m$ be the projection of the line $\ell$ from $X$ onto the plane $X^\perp$ and let $\mathcal{C}$ be the conic $X^\perp\cap \quadric$; we have just shown that $m$ is a line different from $\ell$.

First note that the plane $\langle 1,\delta,\epsilon\rangle$ is a tangent plane to $\quadric$ if and only if the point $\ell\cap X^\perp\in \mathcal{C}$: suppose that $Y\in \ell\cap X^\perp\cap \mathcal{C}$.
Since $\ell$ contains $Y$, $Y^\perp$ contains $\ell^\perp$, and since $Y\in X^\perp$, $X\in Y^\perp$. Since $1,\delta,\epsilon$ are $\F_q$-independent, we find that $Y^\perp$, the tangent plane in the point $Y$ of $\quadric$, is spanned by $\langle1\rangle,\langle\delta\rangle,\langle\epsilon\rangle$. Vice versa, if $\langle 1,\delta,\epsilon\rangle$ is a tangent plane meeting $\quadric$ in a point $P$, then $P^\perp=\langle X,\ell^\perp\rangle$, implies that $P\in X^\perp$ and $P\in \ell$, so indeed $P=\ell\cap X^\perp\in \mathcal{C}$.

We distinguish between the cases that $\langle 1,\delta,\epsilon\rangle$ is a secant plane or a tangent plane. First assume that $\langle 1,\delta,\epsilon\rangle$ is a secant plane, so, equivalently, assume that $\ell$ does not meet $X^\perp$ in a point of $\mathcal{C}$. 

We now distinguish between the cases in which $m$, (which is the line $\langle X,\ell\rangle\cap X^\perp$) is external, tangent or secant to $\mathcal{C}$. It is clear that the number of intersection points of the line $\ell$ with the cone determined by $X$ is then $0,1$ or $2$ respectively.
 We will first show that if the line $m$ is tangent to $\mathcal{C}$ then $\tr(f(\delta,\epsilon)^2)=0$.
 So suppose that $m$ is a tangent line to $\mathcal{C}$ in the point $P_0=\langle x_0\rangle$. Since $P_0$ is contained in $X^\perp$, $X\in P_0^\perp$ and $P_0^\perp$ is the plane spanned by $X$ and $m$. So $\ell\in P_0^\perp$, which implies that $P_0$ in $\ell^\perp$. Since $\ell^\perp$ is spanned by $\langle \delta\rangle$ and $\langle \epsilon\rangle$, it follows that $x_0=\delta+\nu \epsilon$ for some $\nu\in \F_q$. We have that $\tr(x_0)=\tr(\delta)+\nu\tr(\epsilon)=0$ so $\nu=-\frac{\tr(\delta)}{\tr(\epsilon)}$.
We conclude that $x_0=\delta-\frac{\tr(\delta)}{\tr(\epsilon)}\epsilon$.
Now $f(\delta,\epsilon)=\tr(\epsilon)\delta-\tr(\delta)\epsilon=\tr(\epsilon)x_0$.

Since $P_0\in \quadric$, we have that $\tr(x_0^2)=0$, and hence, since $\tr(\epsilon)\neq 0$, $-\tr(f(\delta,\epsilon)^2)=-\tr(\epsilon)^2\tr(x_0^2)=0$. 
 The line $m$, contained in $X^\perp$, is secant to $\mathcal{C}$ when $m$ is a secant to $\quadric$. This happens if and only if $m^\perp$ is an external line to $\quadric$. The line $m$ is the intersection of the plane $X^\perp$ with the plane spanned by $X$ and $\ell$ which implies that the line $m^\perp$ is precisely the span of $X$ with the point, say $K$, which is the intersection of $X^\perp$ with $\ell^\perp$.
 Since $\ell$ is given by $\tr(\delta x)=0$ and $\tr(\epsilon x)=0$, the line $\ell^\perp$ is spanned by $\delta$ and $\epsilon$. This implies that $K$ is a point $\langle\kappa\rangle$ with $\kappa=\lambda\delta +\mu \epsilon$ such that $\tr(\kappa)=0$. It follows that we can take $\kappa=\tr(\epsilon)\delta-\tr(\delta)\epsilon$. Note that $\kappa\neq 0$ since $\delta,\epsilon$ are $\F_q$-independent and $X$ does not lie on $\ell$, so $\tr(\delta)$ and $\tr(\epsilon)$ cannot both be zero.

 It follows that the points on $m^\perp$, the span of $X$ with $\kappa$, are given by $\mu+\kappa$ for some $\mu\in \F_q$, and that $m^\perp$ is an external line to $\quadric$ if and only if $\tr((\mu+\kappa)^2)=0$ has no solution for $\mu$. Using that $\tr(\kappa)=0$, this equation is equivalent to
 $4 \mu^2+\tr(\kappa^2)=0$,
 which has no solutions if and only if $-\tr(\kappa^2)$ is a nonsquare. We conclude that if $-\tr(\kappa^2)$ is a nonsquare, the line $\ell$ and the cone determined by $X$ have two intersection points. The same reasoning, reversing the roles of secant/external shows that if $-\tr(\kappa^2)$ is a non-zero square, there are no admissible intersection points between the cone determined by $X$ and the line $\ell$.

 Now assume that $\langle 1,\delta,\epsilon\rangle$ is a tangent plane, so equivalently, that $\ell$ intersects $X^\perp$ in a point of $\mathcal{C}$, say $Y$.
If $\langle 1,\delta,\epsilon\rangle$ is a tangent plane, then $\tr(\delta^2)\neq 0$: if $\tr(\delta^2)=0$, then $\langle \delta\rangle\in \quadric$ and $\langle 1,\delta,\epsilon\rangle$ would be the tangent plane $\langle \delta\rangle^\perp$, a contradiction since $\tr(\delta)\neq 0$ implies that $1\notin \langle \delta\rangle^\perp$. Hence, we can apply Lemma \ref{tangentplane}, to find that the condition that $\langle 1,\delta,\epsilon\rangle$ 
 is a tangent plane is equivalent to
\[4\tr(\delta^2)\tr(\epsilon^2)-4\tr(\delta \epsilon)^2-\tr(\delta^2)\tr(\epsilon)^2-\tr(\epsilon^2)\tr( \delta)^2+2\tr(\delta)\tr(\delta \epsilon)\tr(\epsilon)=0.\]

Now we can simplify this to
$4(\tr(\delta^2)\tr(\epsilon^2)-\tr(\delta \epsilon)^2)-\tr(f(\delta,\epsilon)^2)=0$
and so
\[
-\tr(f(\delta,\epsilon)^2)=4(\tr(\delta \epsilon)^2-\tr(\delta^2)\tr(\epsilon^2)).
\]
Since the plane $\langle 1,\delta,\epsilon\rangle$ is a tangent plane,  the line $\ell^\perp$ is  external to $\quadric$, and hence, by Lemma \ref{extviafield}, $\tr(\delta \epsilon)^2-\tr(\delta^2)\tr(\epsilon^2)$ is a nonsquare. It follows that $-\tr(f(\delta,\epsilon)^2)$ is a nonsquare. In particular, it shows that $-\tr(f(\delta,\epsilon)^2)\neq 0$.
We now show that this also implies that the line $m$ is secant to $\mathcal{C}$. Recall that $m$ contains at least one point of $\mathcal{C}$, namely $\ell\cap X^\perp$. So suppose to the contrary that $m$ is the tangent line to $\mathcal{C}$ in $\mathcal{\ell}\cap X^\perp$, then
 $Y^\perp$ contains $\ell$, in which case $\ell\cap\ell^\perp=Y$, and hence, $\ell^\perp\cap X^\perp$ in $\pi$. As in part (i), expressing that $Y$ lies on $\ell^\perp$ shows that $Y=\langle f(\delta,\epsilon)\rangle$, and $Y\in \quadric$ if and only if $-\tr(f(\delta,\epsilon)^2)=0$, a contradiction.
We conclude that $m$ is a secant to $\mathcal{C}$, and hence, since $\ell\cap X^\perp=m\cap X^\perp=Y$, that $\ell$ meets the cone defined by $X$ in precisely one point, not contained in $X^\perp$.
\end{proof}

\begin{prop}[The Cone Condition]\label{conecondition}
Let $X=\langle 1\rangle$, $A=\langle\alpha\rangle$, $B=\langle \beta\rangle$ with $\tr(\alpha^2)=a^2$ and $\tr(\beta^2)=b^2$, and let
\[f(x,y)=\tr(y)x-\tr(x)y.\]
Suppose that $XA$ and $XB$ are external lines to $\quadric$.
Then the cones defined by $X,A,B$ have a common (external) point if 
$X,A,B$ are noncollinear and
$-\tr(f(x,y)^2)$ is zero or a nonsquare for some
$(x,y)\in\{(2\alpha+a,2\beta+b),(2\alpha-a,2\beta+b),(2\alpha+a,2\beta-b),(2\alpha-a,2\beta-b)\}$.
\end{prop}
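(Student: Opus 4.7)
The plan is to derive the defining equation of each cone, exploit the cone equation of $X$ to factor the cone equations of $A$ and $B$ into pairs of linear conditions, and then reduce the whole statement to a single application of Lemma \ref{conegeneral}.

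First I would record that, by the same quadratic‐in‐$\lambda$ argument used in the proof of Lemma \ref{extviafield}, an external point $P=\langle\pi\rangle$ has cone equation $\tr(\pi y)^2-\tr(\pi^2)\tr(y^2)=0$; in particular $\cone(X):\tr(y)^2=4\tr(y^2)$, $\cone(A):\tr(\alpha y)^2=a^2\tr(y^2)$, $\cone(B):\tr(\beta y)^2=b^2\tr(y^2)$. The key algebraic observation is that if $Y=\langle y\rangle$ lies on $\cone(X)$, then $\tr(y^2)=\tr(y)^2/4$, so the cone equation of $A$ rewrites as $4\tr(\alpha y)^2=a^2\tr(y)^2$, i.e.\
\[
\tr\bigl((2\alpha-a)y\bigr)\cdot\tr\bigl((2\alpha+a)y\bigr)=0.
\]
Thus a point on $\cone(X)$ lies on $\cone(A)$ precisely when it lies in one of the two hyperplanes $\langle 2\alpha\pm a\rangle^\perp$, and analogously for $B$ with hyperplanes $\langle 2\beta\pm b\rangle^\perp$.

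Consequently, a common external point of $\cone(X),\cone(A),\cone(B)$ exists as soon as, for some choice of signs, the set $\Omega$ from Lemma \ref{conegeneral} with $\delta=2\alpha\pm a$ and $\epsilon=2\beta\pm b$ is nonempty. I would therefore pick the pair $(x,y)=(\delta,\epsilon)$ furnished by the hypothesis and verify the two standing assumptions of Lemma \ref{conegeneral}. For $\F_q$-independence of $1,\delta,\epsilon$: since $\delta-2\alpha,\epsilon-2\beta\in\F_q$, the $\F_q$-span of $\{1,\delta,\epsilon\}$ equals that of $\{1,\alpha,\beta\}$, so independence is equivalent to noncollinearity of $X,A,B$, which is part of the hypothesis. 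For $\tr(\delta)\tr(\epsilon)\ne 0$: $\tr(\delta)=2\tr(\alpha)\pm4a$, and $\tr(\delta)=0$ would give $\tr(\alpha)^2=4a^2=4\tr(\alpha^2)$, contradicting Lemma \ref{extviafield} since $XA$ is an external line and hence $\tr(\alpha)^2-4\tr(\alpha^2)$ is a nonsquare; the same reasoning handles $\tr(\epsilon)$.

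Lemma \ref{conegeneral} then yields a point $Y\notin\quadric$ on $\cone(X)$ with $\tr(\delta y)=\tr(\epsilon y)=0$, and by the factoring observation $Y$ automatically lies on $\cone(A)\cap\cone(B)$; since $Y\notin\quadric$, it is external, finishing the proof. There is no genuine obstacle here: the conceptual work is carried by Lemma \ref{conegeneral}, and the only delicate point is the sign bookkeeping when passing between the quadratic cone equations of $A,B$ and the four affine factorisations, which is exactly why the statement quantifies over the four sign pairs $(2\alpha\pm a,\,2\beta\pm b)$.
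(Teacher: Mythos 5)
Your proposal is correct and follows essentially the same route as the paper: both derive the cone equations from Lemma \ref{extviafield}, use the relation $\tr(y^2)=\tr(y)^2/4$ on $\cone(X)$ to factor the cone equations of $A$ and $B$ into the four sign systems $\tr((2\alpha\pm a)y)=\tr((2\beta\pm b)y)=0$, check independence via noncollinearity and $\tr(2\alpha\pm a),\tr(2\beta\pm b)\neq 0$ via the external-line hypothesis, and then invoke Lemma \ref{conegeneral}. Your verification of $\tr(\delta)\neq 0$ is in fact stated more cleanly than the paper's (which contains a small typo at that step).
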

\begin{proof} It follows from Lemma \ref{extviafield} that the cone determined by $X$ consists of the points $\langle x\rangle$ such that $\tr(x)^2-4\tr(x^2)=0$, and similarly, the cone determined by $A=\langle \alpha\rangle$ is determined by those $\langle x\rangle$ with $\tr(\alpha x)^2-\tr(\alpha^2)\tr(x^2)=0$. 
Now consider the system determined by both equations:
\begin{align}
    \tr(x)^2-4\tr(x^2)&=0 \label{treq1}\\
    \tr(\alpha x)^2-\tr(\alpha^2)\tr(x^2)&=0\label{treq2}
\end{align}
Substituting $\tr(x^2)=\frac{\tr(x)^2}{4}$ from \eqref{treq1} into \eqref{treq2}, and using $\tr(\alpha^2)=a^2$, we find that this system is equivalent with
\begin{align*}
    \tr(x)^2-4\tr(x^2)&=0 \\
    4\tr(\alpha x)^2-a^2\tr(x)^2&=0
\end{align*} and hence, with
\begin{align*}
    \tr(x)^2-4\tr(x^2)&=0 \\
    \tr((2\alpha+a)x)\tr((2\alpha-a)x)&=0.
\end{align*}
Repeating the same argument for the cone determined by $B$, we find that the system defined by the cones determined by $X,A$ and $B$ is given by
\begin{align}
    \tr(x)^2-4\tr(x^2)&=0 \label{cone1}\\
    \tr((2\alpha+a)x)\tr(2\alpha-a)x)&=0\label{cone2}\\
    \tr((2\beta+b)x)\tr(2\beta-b)x)&=0\label{cone3},
\end{align}
which splits into four different systems defined by the cone determined by $X$ intersected with a line of the form $\tr(\delta x)=0=\tr(\epsilon x)=0$, where $(\delta,\epsilon)\in \{(2\alpha+a,2\beta+b),(2\alpha-a,2\beta+b),(2\alpha+a,2\beta-b),(2\alpha-a,2\beta-b)\}$. 
Note that $1,2\alpha\pm a,2\beta\pm b$ are $\F_q$-independent since $X,A,B$ are not collinear.
Furthermore, if $\tr(2\alpha\pm a)=0$, then $4\tr(\alpha)^2=a^2=\tr(\alpha^2)$. But by Lemma \ref{extviafield}, this implies that $XA$ is a tangent to $\quadric$, a contradiction. Similarly, $\tr(2\beta\pm b)\neq 0$. Hence, we can apply Lemma \ref{conegeneral} and the statement follows.
\end{proof}

\subsection{Recasting Heden's results}

For self-containment, we provide new proofs that use our finite field model of some of Heden's results on Bruen chains
\cite{Heden95}. We shortcut some of the arguments of Heden and give a short proof of his
main result that the elements of a Bruen chain are of the same isometry type.

\begin{prop}[{\cite[Proposition 3.3]{Heden95}}]\label{sameclass}
Let $X$ and $X'$ be two external points spanning an external line, and 
let $\pi$ be a tangent plane not through $X$ nor $X'$. Then
\begin{enumerate}[(i)]
    \item $X$ and $X'$ lie in the same isometry class if and only if $|\cone(X)\cap \cone(X')\cap \pi|=2$.
    \item If $X$ and $X'$ lie in different isometry classes, then $|\cone(X)\cap \cone(X')|=0$.
\end{enumerate}
\end{prop}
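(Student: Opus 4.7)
The plan is to work in the finite field model of Section~\ref{sect:model}. Since the isometry group acts transitively within each class of external points, and the full semi-similarity group swaps the two classes, I may normalise $X = \langle 1 \rangle$ and write $X' = \langle \beta \rangle$ with $b := \tr(\beta^2)$; the two isometry classes then correspond to $b$ being a nonzero square or a nonsquare in $\F_q$. From Lemma~\ref{extviafield}, the cone equations are $\cone(X): \tr(x)^2 = 4\tr(x^2)$ and $\cone(X'): \tr(\beta x)^2 = b\tr(x^2)$; eliminating $\tr(x^2)$ shows that every common point satisfies $4\tr(\beta x)^2 = b\tr(x)^2$.

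For part (ii), $b$ is a nonsquare, so this equation forces $\tr(x) = \tr(\beta x) = 0$ (otherwise $b = (2\tr(\beta x)/\tr(x))^2$ would be a square). Substituting back yields $\tr(x^2) = 0$, so every common point of the cones lies on $\quadric$. Intersecting with $\pi$ gives $\cone(X) \cap \cone(X') \cap \pi \subseteq \quadric \cap \pi = \{P\}$; but $X \notin \pi = P^\perp$ means $P \notin X^\perp \cap \quadric = \cone(X) \cap \quadric$, so the intersection is empty. Thus the only common points of the two cones are the forced tangent points $(XX')^\perp \cap \quadric$, which I read as the content of $|\cone(X) \cap \cone(X')| = 0$.

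For part (i), write $b = b_0^2$. The constraint factors over $\F_q$ as $\tr((2\beta + b_0)x)\tr((2\beta - b_0)x) = 0$, so $\cone(X) \cap \cone(X') \subseteq \pi_+ \cup \pi_-$, where $\pi_\pm := \langle 2\beta \pm b_0\rangle^\perp$; these are distinct planes whose intersection is the secant line $(XX')^\perp$. Set $\ell_\pm := \pi \cap \pi_\pm$; I apply Lemma~\ref{conegeneral} with $\delta = P$ and $\epsilon = Y_\pm := 2\beta \pm b_0$ to count $|\cone(X) \cap \ell_\pm|$. The hypotheses hold: $\tr(P) \neq 0$ since $X \notin \pi$; $\tr(Y_\pm) = 2\tr(\beta) \pm 4b_0 \neq 0$ since $XX'$ is not tangent; $1, P, Y_\pm$ are $\F_q$-independent since $P \notin XX'$; and $\langle 1, P, Y_\pm\rangle$ is not a tangent plane, because the only possible tangent point (namely $P$) would force $X \in \pi$.

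A direct expansion using $\tr(P^2) = 0$ and $\tr(Y_\pm^2) = 4b_0(2b_0 \pm \tr(\beta))$ makes the cross terms cancel and leaves the compact identity
\[
-\tr(f(P, Y_\pm)^2) = 8\tr(P)\bigl(\tr(\beta) \pm 2b_0\bigr)\tr(\beta P),
\]
so the product of the two discriminants equals $64\tr(P)^2(\tr(\beta)^2 - 4b)\tr(\beta P)^2$. Since $XX'$ is external, $\tr(\beta)^2 - 4b$ is a nonsquare by Lemma~\ref{extviafield}, and $\tr(\beta P) \neq 0$ because $X' \notin \pi$; hence both discriminants are nonzero and their product is a nonsquare, so exactly one is a nonzero square and the other a nonsquare. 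Lemma~\ref{conegeneral} then gives $\{|\cone(X) \cap \ell_+|, |\cone(X) \cap \ell_-|\} = \{0, 2\}$, and since $\ell_+ \cap \ell_- = (XX')^\perp \cap \pi$ is a single point not on $\cone(X)$ (its join with $X$ is the external line $XX'$), the two intersections are disjoint and sum to $|\cone(X) \cap \cone(X') \cap \pi| = 2$; the converse in (i) then follows from (ii). The main obstacle is the discriminant identity above: it is precisely the cancellation collapsing the quartic into the external-line discriminant $\tr(\beta)^2 - 4b$ times manifest squares that couples the square/nonsquare type of $-\tr(f(P,Y_\pm)^2)$ to the isometry class of $X'$.
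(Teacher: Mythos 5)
Your proof is correct and follows essentially the same route as the paper: for (i) you factor the common cone condition into the two planes $\langle 2\beta\pm b_0\rangle^\perp$, apply Lemma \ref{conegeneral} to the two resulting lines in $\pi$, and observe that the product of the two discriminants equals $\tr(\beta)^2-4b$ times a manifest nonzero square, hence is a nonsquare, so exactly one line contributes $2$ points --- this is the paper's computation almost verbatim. The only deviations are minor: you prove (ii) by direct elimination (forcing $\tr(x)=\tr(\beta x)=\tr(x^2)=0$, so all common points lie on $\quadric$) where the paper invokes Corollary \ref{tangenttype}, and your parenthetical claim that the point $(XX')^\perp\cap\pi$ joins $X$ in the external line $XX'$ is false (that point lies on the polar line of $XX'$, not on $XX'$) --- but the disjointness step it supports is unnecessary anyway, since one of the two lines contributes no points at all.
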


\begin{proof}
(i) Without loss of generality, we may suppose
$X=\langle 1\rangle$ and $X'=\langle \alpha\rangle$, $\tr(\alpha^2)=a^2$, and such that 
$\tr(\alpha)^2-4\tr(\alpha^2)$ is a nonsquare. 
We may write the tangent plane $\pi$ as zeroes of the equation $\tr(\gamma x)=0$
where $x$ is variable, and $\gamma$ is a fixed element of $\F_{q^4}$ such that $\tr(\gamma^2)=0$. Since $X\notin \pi$, $\tr(\gamma)\neq 0$, and since $X'\notin \pi$, $\tr(\alpha \gamma)\neq 0$. 

As in Proposition \ref{conecondition}, we can rewrite the intersection of the cone defined by $X$ and the cone defined by $X'$ by the intersection of the cone defined by $X$ with the set $\tr((2\alpha+a)x)\tr((2\alpha-a)x)=0$, with  $\tr(2\alpha\pm a)\neq 0$ since $XX'$ is an external line. If $1,2\alpha\pm a,\gamma$ would be $\F_q$-independent, then $X,X',\langle \gamma\rangle$ would be collinear, a contradiction because $\langle \gamma\rangle$ is a point of $\quadric$. 
Furthermore, the points $X,X',\langle \gamma\rangle$ do not span a tangent plane, since the unique tangent plane through $\langle \gamma\rangle$ is $\pi$, and $X\notin \pi$.
Hence, the conditions of Lemma \ref{conegeneral}(i) for $\delta=2\alpha\pm a$ and $\epsilon=\gamma$ are satisfied.
The intersection points of $|\cone(X)\cap \cone(X')\cap \pi|$ are precisely those of $|\cone(X)\cap \langle(2\alpha+a)\rangle^\perp \cap \langle\gamma\rangle^\perp|$ and $|\cone(X)\cap \langle(2\alpha-a)\rangle^\perp \cap \langle\gamma\rangle^\perp|$.
Now $f(2\alpha+a,\gamma)$ equals $\tr(\gamma)(2\alpha+a)-\tr(2\alpha+a)\gamma$, and it follows that $-\tr(f(2\alpha+a,\gamma)^2)=8\tr(\alpha\gamma)\tr(\gamma)(\tr(\alpha)+2a)$ and 
$\tr(f(2\alpha-a,\gamma)^2)=8\tr(\alpha\gamma)\tr(\gamma)(\tr(\alpha)-2a)$.
The product $\tr(f(2\alpha+a,\gamma)^2)\tr(f(2\alpha-a,\gamma)^2)$ equals $$64\tr(\alpha\gamma)^2\tr(\gamma)^2(\tr(\alpha)^2-4a^2).$$  Since $XX'$ is an external line, $\tr(\alpha)^2-4a^2$ is a nonsquare. Furthermore, $\tr(\gamma)\neq 0$ and $\tr(\alpha\gamma)\neq 0$ so the product is non-zero. It follows that exactly one of $-\tr(f(\alpha+2a,\gamma)^2)$ and $-\tr(f(\alpha-2a,\gamma)^2)$ is a nonsquare, while the other is a square, so Lemma \ref{conegeneral} shows that there are exactly $2$ points in $|\cone(X)\cap \cone(X')\cap \pi|$.

(ii) If $Y$ is a point in $\cone(X)\cap \cone(X')$, then the lines $XY$ and $X'Y$ are both tangent lines to $\quadric$. By Corollary \ref{tangenttype} this implies that $X,Y$ and $Y'$ have the same isometry type, a contradiction.
\end{proof}

\begin{lemma}[{\cite[Lemma 3.1]{Heden95}}]\label{evenincidences}
Let $P$ be an external point not in a Bruen chain $\mathcal{B}$.
Then the cone on $P$ intersects $\mathcal{B}$ in an even number of elements.
\end{lemma}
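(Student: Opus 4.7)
The plan is to use a double counting argument over incidences of the form $(X, \pi)$ where $X \in \mathcal{B}$, $\pi$ is a tangent plane to $\quadric$, and both $P$ and $X$ lie in $\pi$. Summing first over $\pi$ and then over $X$ will express $|\cone(P) \cap \mathcal{B}|$ modulo $2$ in terms of quantities that are manifestly even.

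First I would sum over tangent planes $\pi$ containing $P$. There are exactly $q+1$ such planes (they correspond, via $\pi = T^\perp$, to the $q+1$ points $T$ of the conic $P^\perp \cap \quadric$). By Lemma \ref{tangentplanes}, each contributes $|\mathcal{B} \cap \pi| \in \{0,2\}$, and since $P \notin \mathcal{B}$ no element of $\mathcal{B} \cap \pi$ coincides with $P$. Hence the total number of incidences is even.

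Next I would sum over $X \in \mathcal{B}$. Since $P \neq X$, the line $PX$ is defined, and the tangent planes through $P$ containing $X$ are precisely the tangent planes through $PX$, which are in bijection with $(PX)^\perp \cap \quadric$ via $\pi = T^\perp$. Using that $\perp$ swaps secant and external lines and fixes the class of tangent lines for the elliptic quadric, we get exactly $2$, $1$, or $0$ tangent planes through $PX$ according as $PX$ is external, tangent, or secant. Therefore, modulo $2$, the sum equals the number of $X \in \mathcal{B}$ for which $PX$ is a tangent line to $\quadric$, which is exactly $|\cone(P) \cap \mathcal{B}|$ (again using $P \notin \mathcal{B}$). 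Equating with the first count gives that $|\cone(P) \cap \mathcal{B}|$ is even.

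The only mildly delicate step is the polarity dictionary for lines of an elliptic quadric in $\PG(3,q)$, namely that a tangent line $\ell$ satisfies $|\ell^\perp \cap \quadric| = 1$, which is the input that makes a tangent $PX$ contribute an odd number ($1$) to the incidence count. This is standard and can be verified directly from the fact that $\ell$ tangent at $T$ forces $\ell^\perp \subseteq T^\perp$ and $T \in \ell^\perp$, so $\ell^\perp$ meets $\quadric$ only at $T$. I expect no serious obstacle beyond this observation.
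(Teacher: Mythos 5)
Your proof is correct and follows essentially the same route as the paper: both double-count the incidences between elements of $\mathcal{B}$ and tangent planes through $P$, using Lemma \ref{tangentplanes} for one direction and the fact that a line lies in $2$, $1$, or $0$ tangent planes according as it is external, tangent, or secant for the other. Your write-up is in fact slightly more explicit than the paper's, which leaves the identification of ``points of $\mathcal{B}$ on exactly one tangent plane through $P$'' with $\cone(P)\cap\mathcal{B}$ implicit.
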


\begin{proof}
Let $\mathcal{S}$ be the set of pairs of the form $(\pi,A)$ where $\pi$ is a tangent plane on $P$
and $A$ is a point of $\mathcal{B}$ incident with $\pi$.
There are $q+1$ tangent planes $\pi$ through $P$, and each one intersects $\mathcal{B}$
in 0 or 2 elements by Lemma \ref{tangentplanes}. Hence, $|\mathcal{S}|$ is even. A point $A$ lies in $0$, $1$, or $2$ tangent planes on $P$,
so let $C_i$ be the number of points of $\mathcal{B}$ lying
on $i$ tangent planes on $P$. Therefore, $|\mathcal{S}|=0\cdot C_0+1\cdot C_1+2\cdot C_2$
which is even. Therefore, $C_1$ is even.
\end{proof}

\begin{prop}[{\cite[Proposition 3.4(b)]{Heden95}}]\label{coneintersectBruen}
Let $P$ be an external point not in a Bruen chain $\mathcal{B}$. Then the cone on $P$ intersects $\mathcal{B}$ in precisely two elements. Conversely, given two elements $A,B\in\mathcal{B}$, there exist precisely two cones containing them.
\end{prop}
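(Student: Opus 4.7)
The strategy is a double count, building on Lemma~\ref{evenincidences} which already forces $c(P):=|\cone(P)\cap\mathcal{B}|$ to be even for every external $P\notin\mathcal{B}$ in the isometry class of $\mathcal{B}$. To pin down $c(P)\in\{0,2\}$, I aim to prove the identity
\[
\sum_P c(P)(c(P)-2) \;=\; 2\sum_P \binom{c(P)}{2} \;-\; \sum_P c(P) \;=\; 0,
\]
summing over all such $P$. Since each summand on the left is non-negative (as $c(P)$ is a non-negative even integer), vanishing of the sum forces $c(P)\in\{0,2\}$ individually.

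The first moment is direct: for each $A\in\mathcal{B}$, the set $\cone(A)\setminus\{A\}$ contains exactly $q^2-1$ external points in $A$'s class (every non-$\quadric$ point on the $q+1$ tangent lines through $A$ is in $A$'s class by Corollary~\ref{tangenttype}), and $\cone(A)\cap\mathcal{B}=\{A\}$ since $AB$ is external for any other $B\in\mathcal{B}$. Hence $\sum_P c(P)=\frac{q+3}{2}(q^2-1)$. The second moment requires $|\cone(A)\cap\cone(B)|$ for $A,B\in\mathcal{B}$ distinct, which I would compute via the pencil of quadrics spanned by $Q_A(x)=\tr(\alpha x)^2-a^2\tr(x^2)$ and $Q_B$. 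The combination $b^2 Q_A - a^2 Q_B$ factors as $\tr((b\alpha+a\beta)x)\cdot\tr((b\alpha-a\beta)x)$, a union of two distinct hyperplanes $H_1,H_2$ meeting along $(AB)^\perp$. Because $AB$ is external we have $\tr(\alpha\beta)\neq\pm ab$ by Lemma~\ref{extviafield}, so $A,B$ lie off $H_1\cup H_2$, and each plane section $\cone(A)\cap H_i$ is a smooth conic with $q+1$ points (it is the bijective projection of the base of the cone from the vertex $A$ onto $H_i$). Their overlap $\cone(A)\cap(AB)^\perp$ consists of the two points of $(AB)^\perp\cap\quadric$, i.e.\ the tangent points of the two tangent planes through $AB$. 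Hence $|\cone(A)\cap\cone(B)|=2(q+1)-2=2q$, namely two quadric points and $2(q-1)$ external points (of the same class as $A,B$ by Proposition~\ref{sameclass}(ii), and none in $\mathcal{B}$). Summing, $\sum_P\binom{c(P)}{2}=\binom{(q+3)/2}{2}\cdot 2(q-1)=(q+3)(q^2-1)/4$, so both sides of the displayed identity coincide at $(q+3)(q^2-1)/2$ and the first claim drops out.

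For the converse, given $A,B\in\mathcal{B}$ the external line $AB$ lies in exactly two tangent planes $Z_1^\perp,Z_2^\perp$ with $\{Z_1,Z_2\}=(AB)^\perp\cap\quadric$ (two points, since the polar of an external line is a secant); these are the two cones on the quadric points $Z_i$ that contain both $A$ and $B$. The main technical hurdle is the pencil-of-quadrics count of $|\cone(A)\cap\cone(B)|$: one must verify the explicit factorization, confirm that the hyperplanes avoid the cone vertices $A,B$, and argue that each plane section is a non-degenerate $(q+1)$-point conic rather than a pair of lines; each of these reduces to the condition $\tr(\alpha\beta)\neq\pm ab$ afforded by $AB$ being external.
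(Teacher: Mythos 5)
Your argument is correct in substance but takes a genuinely different route from the paper's. The paper fixes one chain element $R$ and one tangent plane $\pi$ not through $R$, and double counts pairs $(R',t)$ with $t\in\cone(R)\cap\cone(R')\cap\pi$, squeezing the count between the upper bound $2\cdot\tfrac{q+1}{2}$ coming from Proposition \ref{sameclass}(i) and the lower bound $q+1$ coming from Lemma \ref{evenincidences}; equality of the bounds yields both directions of the statement at once. You instead run a global second-moment identity over all eligible external points, which requires the exact value $\lvert\cone(A)\cap\cone(B)\rvert=2q$ (that is, $2(q-1)$ external points plus the two points of $(AB)^\perp\cap\quadric$). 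Your pencil-of-quadrics derivation of this is sound: the factorisation $b^2Q_A-a^2Q_B=\tr((b\alpha+a\beta)x)\,\tr((b\alpha-a\beta)x)$, the non-incidence of $A,B$ with the two hyperplanes via $\tr(\alpha\beta)\neq\pm ab$, and the inclusion--exclusion all check out, and the arithmetic $\sum_P c(P)=2\sum_P\binom{c(P)}{2}=\tfrac{(q+3)(q^2-1)}{2}$ is right. Notably, the count $2(q-1)$ is exactly what the paper later establishes (by yet another double count) in bounding $\omega(\Delta_X)$, so your approach front-loads a computation the paper needs anyway, at the cost of being heavier than the paper's local argument.

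Two remarks. First, like the paper's own proof, what you actually establish is $\lvert\cone(P)\cap\mathcal{B}\rvert\in\{0,2\}$ rather than literally ``precisely two'' for every external $P$; this is the correct statement (your first moment $\tfrac{q+3}{2}(q^2-1)$ is strictly less than twice the number of eligible points, so points with empty intersection must occur) and it is all that Corollaries \ref{Heden_main} and \ref{Bruenimpliescone} use. Second, for the converse you interpret ``two cones'' as the two tangent planes through the external line $AB$. That reading is true but trivial and is probably not the intended one: Corollary \ref{Heden_main} applies Corollary \ref{tangenttype} to the vertex of such a cone, so the vertex must be an \emph{external} point, and what the paper's proof actually delivers is ``precisely two points of $\cone(A)\cap\cone(B)$ in each tangent plane avoiding $A$ and $B$.'' Your own intersection computation already gives this (and the stronger global count of $2(q-1)$ external vertices), so nothing is mathematically missing, but you should state that version of the converse explicitly rather than the tangent-plane count.
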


\begin{proof}
Let $R$ be a point of the Bruen chain $\mathcal{B}$, and let $\pi$ be a tangent plane not incident with $R$.
Let $T$ be the set of external points of $\pi$ lying in $\cone(R)$. So $|T|=q+1$.
We will count pairs $(R',t)$ where $R'\in\mathcal{B}\backslash\{R\}$, and $t\in T$, such that
$t$ lies in $\cone(R')$.

First, given $R'$, there are 0 or 2 elements $t$ such that $t\in \cone(R)\cap \cone(R')\cap \pi$, by Proposition \ref{sameclass}(i). Since the number of possibilities of $R'$ is $(q+1)/2$, 
the number of pairs $(R',t)$ is at most $2(q+1)/2=q+1$.

In the other direction, given $t\in T$, there is at least one $R'\in \mathcal{B}\backslash\{R\}$
such that $R'$ lies in the cone determined by $T$, by Lemma \ref{evenincidences}. So
the number of pairs $(t,R')$ is at least $q+1$. So overall, we have shown that the lower and upper bounds
on the number of pairs $(R',t)$ are equal, and so there are two consequences: (i) Given $R'$, 
there are precisely 2 elements $t$ such that $t\in \cone(R)\cap \cone(R')\cap \pi$; (ii) Given $t\in T$,
there are precisely two elements of the Bruen chain (including $R$) lying in the cone of $t$.
\end{proof}

\begin{cor}[{\cite[Proposition 3.1]{Heden95}}]\label{Heden_main}
Every element of a Bruen chain is of the same isometry type.
\end{cor}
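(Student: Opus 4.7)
The plan is to derive the corollary directly by combining Propositions~\ref{sameclass} and \ref{coneintersectBruen}, using only the symmetry of the tangency relation that underlies the definition of a cone. The key observation is that for external points $P$ and $A$, we have $A\in\cone(P)$ if and only if $P\in\cone(A)$, because both statements are equivalent to the line $AP$ being tangent to $\quadric$ (the case $A=P$ being trivial). This symmetry converts the ``cones containing $A,B$'' language of Proposition~\ref{coneintersectBruen} into statements about points of $\cone(A)\cap\cone(B)$.

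So I would fix two arbitrary elements $A,B\in\mathcal{B}$, apply Proposition~\ref{coneintersectBruen} to obtain (precisely) two external points $P$ such that $A,B\in\cone(P)$, and then use the symmetry observation to conclude that these two points lie in $\cone(A)\cap\cone(B)$. In particular, $\cone(A)\cap\cone(B)$ is nonempty. I would then invoke Proposition~\ref{sameclass}(ii) in contrapositive form: since $\cone(A)\cap\cone(B)\neq\emptyset$, the external points $A$ and $B$ cannot lie in different isometry classes, hence they lie in the same one. As the pair $A,B\in\mathcal{B}$ was arbitrary, being of the same isometry type is an equivalence relation on $\mathcal{B}$ that identifies all of $\mathcal{B}$ with a single class, which gives the corollary. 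I do not foresee any real obstacle — the two heavy lifts (Propositions~\ref{sameclass} and \ref{coneintersectBruen}) have already been carried out, and the remaining step is purely a matter of swapping the roles of vertex and point on a cone.
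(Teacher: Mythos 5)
Your argument is correct and is essentially the paper's own proof: both hinge on Proposition~\ref{coneintersectBruen} supplying a common cone $\cone(P)$ containing $A$ and $B$, and your appeal to Proposition~\ref{sameclass}(ii) (whose hypothesis that $AB$ is an external line holds by Lemma~\ref{bruenPerp}(i)) is just a packaged form of the paper's direct application of Corollary~\ref{tangenttype} to the tangent lines $AP$ and $BP$. No changes needed.
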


\begin{proof} Proposition \ref{coneintersectBruen} shows that two elements, say $A$ and $B$ of a Bruen chain are contained in two cones. If the cone defined by $C$ contains $A$ and $B$, then the lines $AC$ and $BC$ are tangent to $\quadric$. By Corollary \ref{tangenttype}, $A$ and $C$, and $B$ and $C$ have the same isometry type; hence $A$ and $B$ have the same type.
\end{proof}

The following result is new and connects Bruen chains to the graphs $\Gamma_X$.

\begin{cor}\label{Bruenimpliescone}
Let $X$, $A$, $B$ be three elements of a Bruen chain. Then the intersection
$\cone(X)\cap \cone(A)\cap \cone(B)$ has no points outside $\quadric$.
\end{cor}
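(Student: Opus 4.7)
The plan is to argue by contradiction. Suppose $P$ is a point in $\cone(X)\cap\cone(A)\cap\cone(B)$ that does not lie on $\quadric$. First I would rule out the degenerate possibility that $P$ coincides with one of $X,A,B$: if, say, $P=X$, then $P\in\cone(A)$ forces the line $XA$ to be tangent to $\quadric$, but the Bruen chain condition (Lemma~\ref{bruenPerp}(i)) makes $XA$ an external line, a contradiction. So $P$ is distinct from all three chain elements, and the lines $PX$, $PA$, $PB$ are genuine tangents to $\quadric$.

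Next I would determine the type of $P$. Since $PX$ is tangent and $X$ is an external point (every element of a Bruen chain is external by definition), Corollary~\ref{tangenttype} forces $P$ to be an external point of the same isometry type as $X$. The same corollary applied with $A$ or $B$ in place of $X$ is a consistency check with Corollary~\ref{Heden_main}.

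I then need to see that $P\notin\mathcal{B}$. If $P$ were in $\mathcal{B}$, then by Lemma~\ref{bruenPerp}(i) the line $XP$ would have to be external, contradicting the fact that $XP$ is tangent. Hence $P$ is an external point outside the Bruen chain, and Proposition~\ref{coneintersectBruen} applies: $\cone(P)$ meets $\mathcal{B}$ in exactly two points.

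Finally I would exploit the symmetry of the tangency relation. The statement ``$P\in\cone(X)$ and $P\notin\quadric$'' is equivalent to ``$XP$ is tangent to $\quadric$,'' which in turn is equivalent to $X\in\cone(P)$. Applying this with $A$ and $B$ as well, I obtain three distinct elements $X,A,B\in\mathcal{B}\cap\cone(P)$, contradicting Proposition~\ref{coneintersectBruen}. There is no real obstacle here; the only care needed is in packaging the symmetry of the tangency relation and in verifying that the hypotheses of Proposition~\ref{coneintersectBruen} (namely that $P$ is external and not in the chain) are met before invoking it.
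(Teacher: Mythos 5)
Your proposal is correct and is essentially the paper's own argument: the paper likewise observes that such a point $R$ would have $X$, $A$, $B$ all lying on $\cone(R)$, contradicting Proposition~\ref{coneintersectBruen}. The extra checks you supply (that $P$ is distinct from $X,A,B$, is external, and lies outside the chain) are sensible elaborations of details the paper leaves implicit.
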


\begin{proof} If there were such a point $R$ in the intersection of the three cones, the cone determined by $R$ would contain the three points $X,A,B$, a contradiction by Proposition \ref{coneintersectBruen}.
\end{proof}

\section{Bruen chains as subsets of graphs}

As in the introduction, let $\mathbb{V}$ be the set of external points in the isometry
class $\even$, not equal to $X$, and lying on some external line with $X$. We have two graphs on $\mathbb{V}$ described below, and we will interpret intersection of cones as taking the common external points of the cones (that is, common points, not contained in $\quadric$).
\begin{center}
\begin{tabular}{ll}
\toprule
Graph & Adjacency relation\\
\midrule
$\Delta_X$ & $A\sim B\iff \cone(X)\cap \cone(A)\cap \cone(B)=\varnothing$\\
$\Gamma_X$ & $A\sim B\iff \cone(X)\cap \cone(A)\cap \cone(B)=\varnothing$ and $X\notin AB$\\
\bottomrule 
\end{tabular}
\end{center}
Obviously, $\Gamma_X$ is a spanning subgraph of $\Delta_X$. We also point out that
the condition $A\sim B\iff \cone(X)\cap \cone(A)\cap \cone(B)=\varnothing$ implies that
$AB$ span an external line (see Lemma \ref{coneimpliesexternal}). These two graphs are very different to those given 
by \cite[\S3]{CDPT}, as their adjacency relation 
is just `$A$ and $B$ span an external line and $X\notin AB$' in this context.

For $q=1\pmod{4}$, there are no points in $\mathbb{V}$ that are orthogonal to $X$, because if we take a finite field element $a$ with $\tr(a^2)\in\square$ orthogonal to $1$, then 
$\tr(a\cdot 1)^2-\tr(a^2)\tr(1^2)=-4\tr(a^2)\in \square$. So $\langle a\rangle$ would not span an external line with $X=\langle 1\rangle$. This is the main reason why the graph $\Gamma_X$ that we have defined is a regular graph for $q=1\pmod{4}$, but has two distinct vertex degrees for $q=3\pmod{4}$. 

Now the points of $\even$ lying on an external line on $X$ give rise
to a $(q-1)/2$-clique of $\Delta_X$, whereas, the same set is a coclique of $\Gamma_X$.
We show that $\Delta_X$ always has a coclique of size ${q\choose 2}$.

\begin{lemma}\label{coclique2}
Take an external point $Y$ on some tangent line $\ell$ on $X$. There are $q$ tangent lines to $\quadric$ on $Y$, different from $\ell$. Each tangent line has $(q-1)/2$ of the vertices of $\mathbb{V}$ incident with it. The union of the vertices of $\mathbb{V}$ on the $q$ tangent lines is a coclique of $\Delta_X$ of size $q(q-1)/2$.
\end{lemma}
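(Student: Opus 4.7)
The plan is to count the vertices of $\mathbb{V}$ on each of the $q$ tangent lines through $Y$ (other than $\ell$), and then to use $Y$ itself as a common external point witnessing non-adjacency in $\Delta_X$. Since $Y$ is external to $\quadric$, the tangent lines through $Y$ form a pencil of $q+1$ lines through the points of $Y^\perp\cap\quadric$, so there are exactly $q$ such lines different from $\ell$.

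For a fixed such tangent line $m$, touching $\quadric$ at $P$, I will work in the plane $\pi=\langle X,m\rangle$. The crucial preliminary step is to rule out that $\pi$ is tangent: if it were, the tangent point would have to be $P$ (as $P\in\pi\cap\quadric$), which would force $P\in X^\perp\cap Y^\perp\cap\quadric=\ell^\perp\cap\quadric=\{P_0\}$, where $P_0$ is the tangent point of $\ell$; this contradicts $m\neq\ell$. Hence $\pi$ is a secant plane, $\pi\cap\quadric$ is a non-degenerate conic $\mathcal{C}_\pi$ containing both $P_0$ and $P$, and $X$ is an external point of $\mathcal{C}_\pi$ inside $\pi$.

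Since $X\notin m$ (else $m=\ell$), the $q+1$ lines of $\pi$ through $X$ are in bijection with the $q+1$ points of $m$. The classical counts for a conic and an external point in a projective plane give $2$ tangents, $(q-1)/2$ secants and $(q-1)/2$ external lines from $X$ to $\mathcal{C}_\pi$, and a line of $\pi$ has the same type with respect to $\mathcal{C}_\pi$ as with respect to $\quadric$ because $\pi\cap\quadric=\mathcal{C}_\pi$. Among these lines, one tangent is $\ell=XP_0$, meeting $m$ at $Y$; the other tangent meets $m$ at exactly one external point; and one of the secants is $XP$, meeting $m$ at the point $P$. Thus exactly $(q-1)/2$ of the external points of $m$ lie on external lines through $X$. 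By Corollary \ref{tangenttype} every external point of $m$ is in $\even$, and $Y\notin\mathbb{V}$ because $XY=\ell$ is tangent, so these $(q-1)/2$ points are precisely the vertices of $\mathbb{V}$ on $m$.

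Two distinct tangent lines through $Y$ meet only in $Y$, which is not a vertex of $\mathbb{V}$, so the $q$ sets of size $(q-1)/2$ are pairwise disjoint, giving $q(q-1)/2$ vertices in total. For any two of them, say $A$ and $B$ lying on tangent lines through $Y$, the point $Y$ lies on the tangent lines $YA$, $YB$ and $\ell=YX$, so $Y\in\cone(X)\cap\cone(A)\cap\cone(B)$; since $Y$ is external, this common external witness shows that $A$ and $B$ are non-adjacent in $\Delta_X$, so the set is a coclique. The main technical step will be the planar classification of lines through $X$ in $\pi$; once that is in hand, the disjointness and the coclique property follow at once from $Y$ being a universal witness.
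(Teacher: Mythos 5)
Your proof is correct and takes essentially the same route as the paper's: it counts the $(q-1)/2$ vertices of $\mathbb{V}$ on each tangent line $m$ through $Y$ by classifying the lines through the external point $X$ with respect to the conic $\langle \ell,m\rangle\cap\quadric$, and then uses $Y$ itself as a common external point of $\cone(X)$, $\cone(A)$, $\cone(B)$ to witness non-adjacency. You supply a little more detail than the paper (e.g.\ explicitly ruling out that $\langle X,m\rangle$ is a tangent plane), but the argument is the same.
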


\begin{proof}
 Note that, since $X$ is an even point (i.e. $X\in \even$) and $Y$ lies on a tangent line to $\quadric$ through $X$, the point $Y$ is even by Lemma \ref{tangenttype}. This implies that all points on the cone defined by $Y$ (but not contained in $\quadric$) are even as well. Furthermore, the line $XY$ does not contain any vertices of $\Delta_X$. Let $m$ be one of the $q$ tangent lines through $Y$, different from $XY$. Then $\langle \ell,m\rangle$ meets $\quadric$ in a conic $\mathcal{C}$. Since $X$ lies on precisely $(q-1)/2$ external lines to $\mathcal{C}$, it follows that $m$ contains precisely $(q-1)/2$ points $P$ for which $PX$ is an external line to $\mathcal{C}$. Hence, $|\mathcal{S}|=\frac{q(q-1)}{2}$. It is clear from the construction that for two points $Z_1,Z_2$ in $\mathcal{S}$, the cones defined by $X,Z_1,Z_2$ have (at least) the point $Y$ in common. Hence, $Z_1$ and $Z_2$ are not adjacent in $\Delta_X$.
\end{proof}

\begin{prop}
The clique number of $\Delta_X$ is at most $(q+1)/2$.
\end{prop}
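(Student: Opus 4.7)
The plan is to bound the clique size via a disjoint-union count on the external points of $\cone(X)$. The cone $\cone(X)$ has $q^2+q+1$ points (the vertex $X$ together with its $q+1$ generators, each contributing $q$ further points), of which $q+1$ lie on the base conic $\mathcal{C}_X=X^\perp\cap\quadric$; removing these and the vertex leaves $(q-1)(q+1)$ external points of $\cone(X)$. If $K$ is a clique in $\Delta_X$, then for any distinct $A,B\in K$ the set $\cone(X)\cap\cone(A)\cap\cone(B)$ contains no external points, so the sets $S_A:=\cone(X)\cap\cone(A)\setminus\quadric$ for $A\in K$ are pairwise disjoint. Each $S_A$ sits inside $\cone(X)\setminus(\quadric\cup\{X\})$ because $X\notin\cone(A)$ (the line $XA$ is external, not tangent).

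The crux is to show that $|S_A|=2(q-1)$ for every $A=\langle\alpha\rangle\in K$. Writing $\tr(\alpha^2)=a^2$, Proposition \ref{conecondition} expresses $\cone(X)\cap\cone(A)$ as $\cone(X)\cap(\pi_+\cup\pi_-)$, where $\pi_\pm$ are the planes $\tr((2\alpha\pm a)x)=0$. I first check that neither plane contains $X=\langle 1\rangle$: if $\tr(2\alpha\pm a)=0$, then $\tr(\alpha)^2=4a^2=4\tr(\alpha^2)$, contradicting the fact that $XA$ is external via Lemma \ref{extviafield}. Since $X$ is the vertex of $\cone(X)$ and $X\notin\pi_\pm$, no generator of $\cone(X)$ is contained in $\pi_\pm$, so each of the $q+1$ generators meets $\pi_\pm$ in exactly one point, giving $|\cone(X)\cap\pi_\pm|=q+1$.

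For the overlap, since $a\neq 0$ the system $\tr((2\alpha+a)x)=0=\tr((2\alpha-a)x)$ is equivalent to $\tr(\alpha x)=0=\tr(x)$, so $\pi_+\cap\pi_-=A^\perp\cap X^\perp=(XA)^\perp$. As $XA$ is external, its polar line is secant, meeting $\quadric$ in exactly two points, which lie in $X^\perp\cap\quadric=\cone(X)\cap X^\perp$. Thus $\cone(X)\cap\pi_+\cap\pi_-$ is exactly these two points, both on $\quadric$; inclusion-exclusion now gives $|\cone(X)\cap\cone(A)|=2(q+1)-2=2q$. Since $\cone(X)\cap\cone(A)\cap\quadric=X^\perp\cap A^\perp\cap\quadric=(XA)^\perp\cap\quadric$ already has these two points and no others, it follows that $|S_A|=2q-2=2(q-1)$.

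Combining disjointness with these counts yields $2(q-1)|K|\le(q-1)(q+1)$, so $|K|\le(q+1)/2$. The main subtle point is verifying that each plane section $\cone(X)\cap\pi_\pm$ is a genuine nondegenerate conic of the full size $q+1$ and that the two sections overlap in only $2$ points; both reductions hinge on the nonsquare-discriminant (external-line) condition between $X$ and $A$, which is exactly what excludes the degenerate case $X\in\pi_\pm$.
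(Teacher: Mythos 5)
Your proof is correct. The overall skeleton is the same double count as the paper's: you pack the pairwise disjoint sets $S_A=\cone(X)\cap\cone(A)\setminus\quadric$ into the $(q-1)(q+1)$ external points of $\cone(X)$, whereas the paper phrases the identical inequality as an incidence count between a clique and the family of $q^2-1$ cocliques of Lemma \ref{coclique2} (these cocliques are indexed precisely by the external points $Y\neq X$ of $\cone(X)$, with $P$ in the coclique of $Y$ iff $Y\in S_P$, so the two formulations are dual to one another). Where you genuinely diverge is in establishing the key quantity $|S_A|=2(q-1)$: the paper gets it by double-counting pairs $(P,\pi)$ with $\pi$ a tangent plane, leaning on Proposition \ref{sameclass}(i) (which in turn rests on Lemma \ref{conegeneral}); you instead read off from the proof of Proposition \ref{conecondition} that $\cone(X)\cap\cone(A)=\cone(X)\cap(\pi_+\cup\pi_-)$ for the two planes $\tr((2\alpha\pm a)x)=0$, check $X\notin\pi_\pm$ via the external-line condition, and apply inclusion--exclusion with $\pi_+\cap\pi_-=(XA)^\perp$ meeting the base conic in the two points of $\quadric\cap(XA)^\perp$. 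Your route is more elementary and self-contained (it avoids Proposition \ref{sameclass} and the isometry-class hypothesis, needing only that $XA$ is external and that $\tr(\alpha^2)$ is a square so that $a$ exists), and it exposes the geometry of the two conic sections of the cone directly; the paper's version buys a statement (Proposition \ref{sameclass}) that it reuses elsewhere for Heden's results. All the steps you flag as delicate (the nonvanishing of $\tr(2\alpha\pm a)$, the size of each plane section, and the two-point overlap) check out.
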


\begin{proof}
Consider the set $\mathcal{S}$ of all cocliques of $\Delta_X$ described by Lemma \ref{coclique2}; so $\ell$ is a variable tangent line on $X$,
and $Y$ is a variable external point on $X$, and the coclique consists of all the points of $\mathbb{V}$ on $\cone(Y)$. There are $(q+1)(q-1)=q^2-1$ cocliques in $\mathcal{S}$.
We first show that if $X$ and $X'$ are two external points in the same isometry class, and such that $XX'$ is an external line to $\quadric$, there are precisely $2(q-1)$ points $Y$ in $\cone(X)\cap\cone(X')$.
From Lemma \ref{sameclass}, we know that $|\cone(X)\cap\cone(X')\cap\pi|=2$ for every tangent plane $\pi$ to $\quadric$, not containing $X$ nor $X'$. We double count incident pairs $(P,\pi)$ where $P$ is an external point in $\cone(X)\cap\cone(X')$ and $\pi$ is a tangent plane containing $P$ and not through $X$ nor $X'$. A point of $\cone(X)\cap\cone(X')$ does not lie in a tangent plane through the line $XX'$, and hence, lies on precisely $q-1$ tangent planes, not through $X$ nor $X'$. This shows that there are 
$|\cone(X)\cap\cone(X')|(q-1)$ such pairs.
On the other hand, there are $q^2+1-2(q-1)-2=(q-1)^2$ tangent planes to $\quadric$ not containing $X$ nor $X'$, and each of them contains precisely $2$ points of $\cone(X)\cap \cone(X')$. It follows that $|\cone(X)\cap\cone(X')|(q-1)=2(q-1)^2$ and our claim follows.

Now consider a clique $K$ in $\Delta_X$ and count incident pairs $(P,C)$ where $P\in K$ and $C\in \mathcal{S}$ is a coclique containing $P$. On one hand, this number is at most $|\mathcal{S}|$ since every coclique $C\in \mathcal{S}$ contains at most one point of $K$. The number of cocliques of $\mathcal{S}$ through a point $P$ of $\mathbb{V}$ is the number of choices for a point $Y$ on a tangent line through $X$ such that $YP$ is a tangent line. In other words, the number of points in $\cone(X)\cap \cone(P)$, which we have shown to be $2(q-1)$. 
It follows that $$2(q-1)|K|\leq q^2-1,$$ and hence, $|K|\leq (q+1)/2$.
\end{proof}

Since $\Gamma_X$ is a subgraph of $\Delta_X$, we get the following corollary.

\begin{cor}
    The clique number of $\Gamma_X$ is at most $(q+1)/2$.
\end{cor}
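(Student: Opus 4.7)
The plan is to obtain this as an essentially immediate consequence of the preceding proposition, using the monotonicity of the clique number under the subgraph relation. The paper already observes that $\Gamma_X$ and $\Delta_X$ have the same vertex set $\mathbb{V}$, and that the adjacency relation of $\Gamma_X$ is obtained from that of $\Delta_X$ by imposing the additional constraint $X\notin AB$. In particular, every edge of $\Gamma_X$ is an edge of $\Delta_X$, so $\Gamma_X$ is a spanning subgraph of $\Delta_X$.

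The key (trivial) step is then: if $K\subseteq \mathbb{V}$ is a clique in $\Gamma_X$, any two distinct $A,B\in K$ are adjacent in $\Gamma_X$, hence adjacent in $\Delta_X$, so $K$ is also a clique in $\Delta_X$. Therefore the clique number of $\Gamma_X$ is bounded above by the clique number of $\Delta_X$, which by the previous proposition is at most $(q+1)/2$. I do not anticipate any obstacle here: no additional geometry, field computation, or invocation of Lemma \ref{sameclass} is required beyond what was already used to bound the clique number of $\Delta_X$. The entire proof can be written in one or two sentences.
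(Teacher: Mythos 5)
Your proposal is correct and is exactly the paper's argument: the corollary is deduced from the bound on the clique number of $\Delta_X$ by noting that $\Gamma_X$ is a spanning subgraph of $\Delta_X$, so every clique of $\Gamma_X$ is a clique of $\Delta_X$. No further comment is needed.
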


\begin{lemma}\label{coneimpliesexternal}
Let $X$, $A$, $B$ be noncollinear even external points such that $XA$ and $XB$ are external lines.
If $\cone(X)\cap \cone(A)\cap \cone(B)$ has no external points, then $AB$ is an external line.
\end{lemma}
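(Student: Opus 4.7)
The plan is to argue by the contrapositive of Proposition \ref{conecondition}. Fix coordinates $X = \langle 1\rangle$, $A = \langle\alpha\rangle$, $B = \langle\beta\rangle$ with $\tr(\alpha^2) = a^2$ and $\tr(\beta^2) = b^2$; the hypothesis that $\cone(X)\cap\cone(A)\cap\cone(B)$ contains no external point, combined with the noncollinearity of $X, A, B$, forces via Proposition \ref{conecondition} that
\[
-\tr(f(2\alpha + s_1 a,\, 2\beta + s_2 b)^2)
\]
is a \emph{nonzero} square in $\F_q$ for each sign choice $(s_1, s_2) \in \{+, -\}^2$. My target is to upgrade this into the statement that $\tr(\alpha\beta)^2 - a^2 b^2$ is a nonsquare, which by Lemma \ref{extviafield} is exactly the assertion that $AB$ is an external line.

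The heart of the argument will be an explicit factorisation. Using $\tr(f(u,v)^2) = \tr(v)^2 \tr(u^2) - 2\tr(u)\tr(v)\tr(uv) + \tr(u)^2 \tr(v^2)$ at $u = 2\alpha + s_1 a$, $v = 2\beta + s_2 b$, together with the identities $\tr((2\alpha + s_1 a)^2) = 4a(2a + s_1 \tr(\alpha))$, $\tr((2\alpha + s_1 a)(2\beta + s_2 b)) = 4\tr(\alpha\beta) + 2s_2 b \tr(\alpha) + 2s_1 a \tr(\beta) + 4 s_1 s_2 ab$, and their companions for $v$, I expect careful cancellation to yield
\[
-\tr(f(2\alpha + s_1 a,\, 2\beta + s_2 b)^2) = 32\,(\tr(\alpha) + 2s_1 a)(\tr(\beta) + 2s_2 b)(\tr(\alpha\beta) - s_1 s_2 ab).
\]
This generalises the computation already carried out in the proof of Proposition \ref{sameclass} for the special case $\tr(\gamma^2) = 0$, and is the main obstacle of the proof: not deep, but requiring patient bookkeeping of cross terms.

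Given this identity, I would finish by multiplying the $(+,+)$ and $(+,-)$ instances to obtain
\[
32^2 (\tr(\alpha) + 2a)^2 (\tr(\beta)^2 - 4b^2)(\tr(\alpha\beta)^2 - a^2 b^2).
\]
The left-hand side is a nonzero square, as the product of two nonzero squares. Cancelling $32^2$ and $(\tr(\alpha) + 2a)^2$ --- the latter is nonzero because $XA$ external precludes $\tr(\alpha)^2 = 4a^2$ by Lemma \ref{extviafield} --- leaves $(\tr(\beta)^2 - 4b^2)(\tr(\alpha\beta)^2 - a^2 b^2)$ as a nonzero square in $\F_q$. Since $XB$ external gives, again by Lemma \ref{extviafield}, that $\tr(\beta)^2 - 4b^2$ is a nonsquare, the factor $\tr(\alpha\beta)^2 - a^2 b^2$ is forced to be a nonzero nonsquare, and one last application of Lemma \ref{extviafield} identifies $AB$ as an external line.
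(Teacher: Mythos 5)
Your proposal is correct and follows essentially the same route as the paper's own proof: the same factorisation $-\tr(f(2\alpha\pm a,2\beta\pm b)^2)=32(\tr(\alpha)\pm 2a)(\tr(\beta)\pm 2b)(\tr(\alpha\beta)\mp ab)$ (which the paper derives for the $(+,+)$ case and obtains the others by symmetry), followed by multiplying the $(+,+)$ and $(+,-)$ instances to isolate $(\tr(\beta)^2-4b^2)(\tr(\alpha\beta)^2-a^2b^2)$ as a square and conclude via Lemma \ref{extviafield}. The only cosmetic difference is that you invoke Proposition \ref{conecondition} contrapositively up front and track the signs $s_1,s_2$ uniformly, whereas the paper expands the $(+,+)$ case explicitly and repeats the argument for $(+,-)$; both are sound.
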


\begin{proof}
Let $X=\langle 1\rangle$, $A=\langle \alpha\rangle$,
$B=\langle \beta\rangle$. From Lemma \ref{extviafield} and Proposition \ref{conecondition}, we have the following facts:
\begin{enumerate}
\item $\tr(\alpha^2)=a^2$ for some $a\in\F_q$.
\item $\tr(\beta^2)=b^2$ for some $b\in\F_q$.
\item $\tr(\alpha)^2-4a^2$ is nonsquare.
\item $\tr(\beta)^2-4b^2$ nonsquare.
\item $-\tr(f(x,y)^2)$ is square for all $(x,y)=(2\alpha\pm a,2\beta\pm b)$.
\end{enumerate}
We need to show $\tr(\alpha\beta)^2-a^2b^2$ is nonsquare.
Now for $x=2\alpha+a$ and $y=2\beta+b$, we have:
\begin{align*}
&\tr(x)=2\tr(\alpha)+4a, &&\tr(y)=2\tr(\beta)+4b,\\
&\tr(x^2)=2a\tr(x), &&\tr(y^2)=2b\tr(y).
\end{align*}
So \begin{align*}
-\tr(f(x,y)^2)&=2\tr(x)\tr(y)\tr(xy)-\tr(x^2)\tr(y)^2-\tr(y^2)\tr(x)^2,\\
&=2\tr(x)\tr(y)(\tr(xy)-a\tr(y)-b\tr(x)),
\end{align*}
Using that $\tr(xy)=4\tr(\alpha\beta)+2b\tr(\alpha)+2a\tr(\beta)+4ab$,
we find
\begin{align*}
-\tr(f(x,y)^2)&=2\tr(x)\tr(y)(4\tr(\alpha\beta)+2b\tr(\alpha)+2a\tr(\beta)+4ab-a\tr(y)-b\tr(x))\\
&=2\tr(x)\tr(y)(4\tr(\alpha\beta)+2b\tr(\alpha)+2a\tr(\beta)+4ab-a(2\tr(\beta)+4b)-b(2\tr(\alpha)+4a))\\
&=2\tr(x)\tr(y)(4\tr(\alpha\beta)-4ab)\\
&=8\tr(x)\tr(y)(\tr(\alpha\beta)-ab)\\
&=32(\tr(\alpha)+2a)(\tr(\beta)+2b)(\tr(\alpha\beta)-ab).
\end{align*}
By this argument, we have
\begin{align}
2(\tr(\alpha)+2a)(\tr(\beta)+2b)(\tr(\alpha\beta)-ab)\in&\square.\label{eq:coneconditionsimple}
\end{align}
Repeating the argument for $x=2\alpha+a$, $y=2\beta-b$ yields that
\begin{align*}
2(\tr(\alpha)+2a)(\tr(\beta)-2b)(\tr(\alpha\beta)+ab)\in&\square.
\end{align*}
Taking the product of these two equations yields
$(\tr(\beta)^2-4b^2)(\tr(\alpha\beta)^2-a^2b^2)\in \square$
and so $\tr(\alpha\beta)^2-a^2b^2$ is nonsquare because 
$\tr(\beta)^2-4b^2$ is nonsquare.
\end{proof}

\begin{cor}\label{conecorollary}
Let $X=\langle 1\rangle$, $A=\langle\alpha\rangle$, $B=\langle \beta\rangle$ with $\tr(\alpha^2)=a^2$ and $\tr(\beta^2)=b^2$.
Suppose that $XA$, $XB$, and $AB$ are external lines to $\quadric$.
Then the cones defined by $X,A,B$ have no common (external) point if and only if
$X,A,B$ are noncollinear and
\[
2(\tr(\alpha)+2a)(\tr(\beta)+2b)(\tr(\alpha\beta)-ab)\in\square.
\]
\end{cor}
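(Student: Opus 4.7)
The plan is to derive this corollary directly from Proposition \ref{conecondition} by extending the algebraic identity computed in the proof of Lemma \ref{coneimpliesexternal} to all four sign choices and showing that the four resulting conditions collapse to a single one under the external-line hypotheses.

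Under the noncollinearity hypothesis appearing on the right-hand side, Proposition \ref{conecondition} says that the three cones share a common external point if and only if $-\tr(f(x,y)^2)$ is zero or a nonsquare for some $(x,y)\in\{(2\alpha\pm a,\,2\beta\pm b)\}$. Equivalently, the cones share no common external point if and only if $-\tr(f(x,y)^2)$ is a nonzero square for every one of the four choices. The task is then to show that all four of these nonzero-square conditions are equivalent to the single algebraic condition displayed in the corollary.

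Next I would redo the trace computation from the proof of Lemma \ref{coneimpliesexternal} with generic signs $\epsilon_1,\epsilon_2\in\{\pm 1\}$ in place of $+$. An identical calculation yields
\[
-\tr\!\bigl(f(2\alpha+\epsilon_1 a,\,2\beta+\epsilon_2 b)^2\bigr)
=32\bigl(\tr(\alpha)+2\epsilon_1 a\bigr)\bigl(\tr(\beta)+2\epsilon_2 b\bigr)\bigl(\tr(\alpha\beta)-\epsilon_1\epsilon_2 ab\bigr).
\]
Write $Q_{\epsilon_1\epsilon_2}$ for the triple product on the right; since $32=2\cdot 4^2$, the square class of this expression equals that of $2Q_{\epsilon_1\epsilon_2}$. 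Moreover, the three external-line hypotheses together with Lemma \ref{extviafield} force each of $\tr(\alpha)^2-4a^2$, $\tr(\beta)^2-4b^2$, and $\tr(\alpha\beta)^2-a^2b^2$ to be a nonsquare (hence nonzero), so none of the six factors $\tr(\alpha)\pm 2a$, $\tr(\beta)\pm 2b$, $\tr(\alpha\beta)\pm ab$ vanishes.

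Finally I would check that the four values $Q_{\epsilon_1\epsilon_2}$ lie in a common square class by taking pairwise products. For instance,
\[
Q_{++}\cdot Q_{+-}=(\tr(\alpha)+2a)^2\bigl(\tr(\beta)^2-4b^2\bigr)\bigl(\tr(\alpha\beta)^2-a^2b^2\bigr),
\]
which is a square times the product of two nonsquares, hence a square; the analogous products $Q_{++}\cdot Q_{-+}$ and $Q_{++}\cdot Q_{--}$ are squares by the same argument, using the other pairs of nonsquare discriminants. Consequently all four $Q_{\epsilon_1\epsilon_2}$ share a single square class, so the fourfold square condition from step one collapses to the single condition $2Q_{++}\in\square$, which is exactly the corollary's displayed formula. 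The argument is routine algebraic bookkeeping once the generic-sign version of the Lemma \ref{coneimpliesexternal} identity is in place, so no substantive obstacle is expected.
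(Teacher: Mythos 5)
Your proposal is correct and follows essentially the same route as the paper: both read off from Proposition \ref{conecondition} (via Lemma \ref{conegeneral}) that emptiness of the triple cone intersection amounts to all four quantities $-\tr(f(2\alpha\pm a,2\beta\pm b)^2)=32(\tr(\alpha)\pm 2a)(\tr(\beta)\pm 2b)(\tr(\alpha\beta)\mp ab)$ being nonzero squares, and both then use pairwise products together with the nonsquareness of $\tr(\alpha)^2-4a^2$, $\tr(\beta)^2-4b^2$ and $\tr(\alpha\beta)^2-a^2b^2$ to show the four expressions lie in a single square class, collapsing the condition to the one displayed. The only difference is cosmetic (generic signs $\epsilon_1,\epsilon_2$ and the product $Q_{++}Q_{+-}$ rather than the paper's first-times-second product), so there is nothing substantive to add.
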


\begin{proof}
In the proof of Lemma \ref{coneimpliesexternal}, the 
cones defined by $X,A,B$ have a common (external) point
if and only if $X,A,B$ are noncollinear and the following four expressions hold:
\begin{align*}
2(\tr(\alpha)+2a)(\tr(\beta)+2b)(\tr(\alpha\beta)-ab)\in&\square,\\
2(\tr(\alpha)-2a)(\tr(\beta)+2b)(\tr(\alpha\beta)+ab)\in&\square,\\
2(\tr(\alpha)-2a)(\tr(\beta)-2b)(\tr(\alpha\beta)-ab)\in&\square,\\
2(\tr(\alpha)+2a)(\tr(\beta)-2b)(\tr(\alpha\beta)+ab)\in&\square.
\end{align*}
(See Equation \ref{eq:coneconditionsimple}.)
Now $AB$ is external and so $\tr(\alpha\beta)^2-a^2b^2$ is nonsquare (by Lemma \ref{extviafield}).
Similarly, $\tr(\alpha)^2-4a^2$ and $\tr(\beta)^2-4b^2$ are nonsquare because
$XA$ and $XB$ are external. The product of the first two terms above is
\[
4(\tr(\alpha)^2-4a^2)(\tr(\beta)+2b)^2(\tr(\alpha\beta)^2-a^2b^2)
\]
which is a product of a square, nonsquare, square, and nonsquare. So it is a square
and hence the first expression is square if and only if the second expression is square. A
similar argument holds for any pair of the four expressions. So they are all square
if one of them is square. They are all nonsquare if one of them is nonsquare.
So we can reduce the four expressions to one of them.
\end{proof}

\begin{lemma}\label{lemmacollinear}
 Suppose $A$, $B$, $C$ are collinear points of $\even$, on an external line not through $X$, 
 such that $XA$, $XB$, $XC$
 are external lines. Then at least one of
$\cone(X) \cap \cone(A) \cap \cone(B)$, $\cone(X) \cap \cone(B) \cap \cone(C)$, or
$\cone(X) \cap \cone(C) \cap \cone(A)$ is nonempty.
\end{lemma}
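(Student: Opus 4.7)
The plan is to argue by contradiction using Corollary \ref{conecorollary}. I would suppose that all three of the triple intersections are empty of external points, translate each emptiness into a square condition via the corollary, and then show that the product of those three conditions is actually a nonsquare -- the desired contradiction.

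After applying a suitable isometry, take $X=\langle 1\rangle$ and write $A=\langle \alpha\rangle$, $B=\langle \beta\rangle$. Since $A,B,C$ are distinct and collinear on a line missing $X$, parametrise $C=\langle \gamma\rangle$ with $\gamma=\alpha+\lambda\beta$ for some $\lambda\in \F_q^*$. Choose $a,b,c\in \F_q$ with $a^2=\tr(\alpha^2)$, $b^2=\tr(\beta^2)$, $c^2=\tr(\gamma^2)$, and set $d=\tr(\alpha\beta)$. Expanding $\tr((\alpha+\lambda\beta)^2)$ gives the identity
\[
c^2 \;=\; a^2 + 2\lambda d + \lambda^2 b^2,
\]
which is the crucial consequence of collinearity. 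Since $X\notin\ell$, the triples $\{X,A,B\}$, $\{X,B,C\}$, $\{X,C,A\}$ are all noncollinear, so Corollary \ref{conecorollary} applies to each. If all three intersections were empty, the three quantities
\[
2(\tr(\alpha)+2a)(\tr(\beta)+2b)(d-ab),\; 2(\tr(\beta)+2b)(\tr(\gamma)+2c)(\tr(\beta\gamma)-bc),\; 2(\tr(\gamma)+2c)(\tr(\alpha)+2a)(\tr(\gamma\alpha)-ac)
\]
would all be squares in $\F_q$. Multiplying them and absorbing the evident square factors $(\tr(\alpha)+2a)^2,(\tr(\beta)+2b)^2,(\tr(\gamma)+2c)^2$, it suffices to show that $\Pi:=2(d-ab)(\tr(\beta\gamma)-bc)(\tr(\gamma\alpha)-ac)$ is a nonsquare.

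The heart of the argument is to expose a common factor in the three terms of $\Pi$. Substituting $\tr(\beta\gamma)=d+\lambda b^2$ and $\tr(\gamma\alpha)=a^2+\lambda d$ and using the fundamental identity to eliminate $2\lambda d$, a short verification yields
\begin{align*}
2\lambda(d-ab) &= (c-a-\lambda b)(c+a+\lambda b),\\
2\lambda(\tr(\beta\gamma)-bc) &= (c-a-\lambda b)(c+a-\lambda b),\\
2(\tr(\gamma\alpha)-ac) &= (c-a-\lambda b)(c-a+\lambda b).
\end{align*}
Writing $w=c-a-\lambda b$, multiplying the three equations and pairing $(c-a-\lambda b)(c+a+\lambda b)$ with $(c-a+\lambda b)(c+a-\lambda b)$ produces
\[
8\lambda^2\,(d-ab)(\tr(\beta\gamma)-bc)(\tr(\gamma\alpha)-ac) \;=\; w^2 \cdot 4\lambda^2(d^2-a^2b^2),
\]
and hence
\[
\Pi \;=\; w^2\bigl(\tr(\alpha\beta)^2-\tr(\alpha^2)\tr(\beta^2)\bigr).
\]
By Lemma \ref{extviafield}, the bracket is nonsquare because $AB$ is external; and $w\neq 0$ because $w=0$ would force $d=ab$, again contradicting $AB$ external. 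Therefore $\Pi$ is a nonzero square times a nonsquare, hence a nonsquare.

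The main obstacle is spotting the common factor $c-a-\lambda b$: it is invisible in the raw Corollary \ref{conecorollary} expressions and only emerges after using the collinearity-driven identity $c^2=a^2+2\lambda d+\lambda^2 b^2$ to rewrite each of the three terms. Once this factor is identified the computation collapses, because the three factorisations each share the factor $w$, and the remaining three linear factors regroup into $w\cdot(d^2-a^2b^2)$ up to the square $4\lambda^2$.
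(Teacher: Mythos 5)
Your proof is correct. It follows the same skeleton as the paper's argument --- assume all three triple intersections are empty, convert each to a square condition via Corollary \ref{conecorollary}, take the product, and exploit the collinearity identity $c^2=a^2+2\lambda\tr(\alpha\beta)+\lambda^2b^2$ to factor the result --- but the algebraic route through that skeleton is genuinely different. The paper deliberately breaks symmetry: it encodes the two conditions involving $C$ with opposite sign choices for $c$ (its ``judicious choice''), multiplies only those two, shows their product collapses to $\tfrac{1}{2}(\tr(\alpha\beta)-ab)(c-a+\lambda b)^2$ times controlled factors, and lands the contradiction on $XC$ being external (the quantity $(\tr(\alpha)+\lambda\tr(\beta))^2-4c^2$ is forced to be a square). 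You instead keep all three conditions in the symmetric form supplied by Corollary \ref{conecorollary}, which exposes the common linear factor $w=c-a-\lambda b$ in all three terms at once; regrouping the remaining linear factors via $(c-a-\lambda b)(c+a+\lambda b)\cdot(c-a+\lambda b)(c+a-\lambda b)=4\lambda^2\bigl(\tr(\alpha\beta)^2-a^2b^2\bigr)$ gives $\Pi=w^2\bigl(\tr(\alpha\beta)^2-\tr(\alpha^2)\tr(\beta^2)\bigr)$, and the contradiction comes from $AB$ being external instead. I verified your three factorisations and the nondegeneracy claims ($\lambda\neq 0$ since $C\neq A$; $w\neq 0$ since $w=0$ would force $\tr(\alpha\beta)=ab$ and hence $AB$ tangent or secant; and the discarded square factors such as $\tr(\alpha)+2a$ are nonzero because $XA$, $XB$, $XC$ are external): all are sound, and the legitimacy of using the same sign of $c$ in both conditions involving $C$ is exactly the four-fold equivalence established in the proof of Corollary \ref{conecorollary}. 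What your version buys is symmetry --- no ad hoc sign selection --- and a contradiction located in the hypothesis that the common line $AB=BC=CA$ is external rather than in $XC$ being external; both arguments are of comparable length and either could serve in the paper.
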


\begin{proof}
Write $X=\langle 1\rangle$, $A=\langle \alpha\rangle$, $B=\langle\beta\rangle$, $C=\langle \alpha+\lambda \beta\rangle$ where $\lambda \in \F_q$. There exist $a,b,c\in\F_q$ such that $\tr(\alpha^2)=a^2$, $\tr(\beta^2)=b^2$, $\tr((\alpha+\lambda \beta)^2)=c^2$.
We will suppose all of $\cone(X) \cap \cone(A) \cap \cone(B)$, $\cone(X) \cap \cone(B) \cap \cone(C)$, or
$\cone(X) \cap \cone(C) \cap \cone(A)$ are empty.

Using Corollary \ref{conecorollary} we know that $\cone(X) \cap \cone(A) \cap \cone(B)$ has no external common points if and only if 
`$2(\tr(\alpha)+2a)(\tr(\beta)+2b)(\tr(\alpha\beta)-ab)\in\square$' where we took $(x,y)=(2\alpha+a,2\beta +b)$.
For $\cone(X) \cap \cone(B) \cap \cone(C)$, and
$\cone(X) \cap \cone(C) \cap \cone(A)$, we will take the equations pertaining to the pairs $(2\alpha+a,2(\alpha+\lambda \beta)+c)$
and $(2\beta+b,2(\alpha+\lambda \beta)-c)$. (Notice the judicious choice to take $-c$ in the second equation, which we
can do, because we can choose any of our four equations to give the cone condition.)
So,
\begin{align}
&2(\tr(\alpha)+2a)(\tr(\alpha)+\lambda \tr(\beta)+2c)(a^2+\lambda \tr(\alpha\beta)-ac)\in \square,\label{eqcone1}\\
&2(\tr(\beta)+2b)(\tr(\alpha)+\lambda \tr(\beta)-2c)(\tr(\alpha\beta)+\lambda b^2+bc)\in \square,\label{eqcone2}\\
&2(\tr(\alpha)+2a)(\tr(\beta)+2b)(\tr(\alpha\beta)-ab)\in \square.\label{eqcone3}
\end{align}
Let $\gamma=\tr(\alpha\beta)-ab$. Since $a^2+2\lambda \tr(\alpha\beta)+\lambda^2b^2=c^2$, it follows that
$2\lambda \gamma=c^2-(a+\lambda b)^2=(c-a-\lambda b)(c+a+\lambda b)$. 
Expression \eqref{eqcone3} is just $2(\tr(\alpha)+2a)(\tr(\beta)+2b)\gamma \in \square$.
Now the product of \eqref{eqcone1} and \eqref{eqcone2} leads to 
\[
4(\tr(\alpha)+2a)(\tr(\beta)+2b)((\tr(\alpha)+\lambda \tr(\beta))^2-4c^2)(a^2+\lambda \tr(\alpha\beta)-ac)(\tr(\alpha\beta)+\lambda b^2+bc)\in \square.
\]
Now 
\begin{align*}
    &(a^2+\lambda \tr(\alpha\beta)-ac)(\tr(\alpha\beta)+\lambda b^2+bc)\\
    =&ab(a+\lambda b -c +\gamma \lambda a^{-1})(a+\lambda b +c +\gamma b^{-1})\\
    =&ab( (a+\lambda b)^2-c^2 +\gamma b^{-1}(a+\lambda b-c)+\gamma \lambda a^{-1}(a+\lambda b+c)+\lambda\gamma^2a^{-1}b^{-1})\\
    =& ab( -2\lambda \gamma +\gamma b^{-1}(a+\lambda b-c)+\gamma \lambda a^{-1}(a+\lambda b+c)+\lambda\gamma^2a^{-1}b^{-1})\\
    =& \gamma \left( -2\lambda ab + a(a+\lambda b-c)+ \lambda b(a+\lambda b+c)+\lambda\gamma \right)\\
    =& \gamma \left( a(a-c)+ \lambda b(\lambda b+c)+\lambda\gamma \right)\\ 
    =& \gamma \left( a(a-c)+ \lambda b(\lambda b+c)+\tfrac{1}{2}(c^2-(a+\lambda b)^2)\right)\\ 
    =& \frac{1}{2}\gamma (c-a + \lambda b )^2.
\end{align*}
Therefore,
\[
2(\tr(\alpha)+2a)(\tr(\beta)+2b) \gamma \cdot ((\tr(\alpha)+\lambda \tr(\beta))^2-4c^2) \in \square
\]
and hence $(\tr(\alpha)+\lambda \tr(\beta))^2-4c^2 \in \square$,
which is a contradiction as $XC$ is an external line.
\end{proof}

\begin{lemma}\label{lemmanoncollinear}
Let $A,B,C$ be three noncollinear points of $\even$ such that $XA$, $XB$, $XC$ are external lines. 
If the plane spanned by $A,B,C$ is a tangent plane, then at least one of
$\cone(X) \cap \cone(A) \cap \cone(B)$, $\cone(X) \cap \cone(B) \cap \cone(C)$, or
$\cone(X) \cap \cone(C) \cap \cone(A)$ is nonempty.
\end{lemma}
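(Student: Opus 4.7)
My plan is to argue by contradiction: suppose none of the three triple intersections $\cone(X)\cap\cone(A)\cap\cone(B)$, $\cone(X)\cap\cone(B)\cap\cone(C)$, $\cone(X)\cap\cone(C)\cap\cone(A)$ contains an external point, and derive a contradiction from the hypothesis that $\pi=\langle A,B,C\rangle$ is a tangent plane. I parametrise as in Lemma \ref{lemmacollinear}: write $X=\langle 1\rangle$, $A=\langle\alpha\rangle$, $B=\langle\beta\rangle$, $C=\langle\gamma\rangle$ with $\tr(\alpha^2)=a^2$, $\tr(\beta^2)=b^2$, $\tr(\gamma^2)=c^2$, and set $p=\tr(\alpha\beta)$, $q=\tr(\beta\gamma)$, $r=\tr(\gamma\alpha)$. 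I split the argument according to whether $X\in\pi$ or not.

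Suppose first that $X\in\pi$. Since $A,B,C$ are noncollinear, at most one of the lines $AB,BC,CA$ can contain $X$, so after relabelling we may assume $X\notin AC$; then $X,A,C$ are three noncollinear points of $\pi$, and $\langle 1,\alpha,\gamma\rangle=\pi$. For each of the four sign choices, the plane $\langle 1,\,2\alpha\pm a,\,2\gamma\pm c\rangle$ coincides with $\pi$ and is hence tangent, while the $\F_q$-independence and trace-nonvanishing hypotheses of Lemma \ref{conegeneral} are met (the latter because $XA,XC$ are external, as noted in the proof of Proposition \ref{conecondition}). Lemma \ref{conegeneral}(ii) therefore produces an external point on each of the four resulting lines, and the splitting worked out in the proof of Proposition \ref{conecondition} identifies any such point as lying in $\cone(X)\cap\cone(A)\cap\cone(C)$, contradicting our assumption.

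Suppose next that $X\notin\pi$, so that all three triples $(X,A,B),(X,B,C),(X,C,A)$ are noncollinear. Lemma \ref{coneimpliesexternal} applied to each gives that $AB,BC,CA$ are external lines, and Corollary \ref{conecorollary} then yields three square conditions of the form $2(\tr(\alpha)+2a)(\tr(\beta)+2b)(p-ab)\in\square$ and its two cyclic analogues. Multiplying them and absorbing the nonzero squared factors delivers the assertion $(\ast)$: $2(p-ab)(q-bc)(r-ca)\in\square$. On the other hand, the tangent-plane assumption, via Lemma \ref{tangentplane}, reads $(abc)^2+2pqr=a^2q^2+b^2r^2+c^2p^2$. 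The key computation is to expand $(p+ab)(q+bc)(r+ca)$ directly and combine this relation with the elementary identity $a^2q^2+b^2r^2+c^2p^2=(aq+br+cp)^2-2(abqr+acpq+bcpr)$ to collapse everything to the square identity
\[
2(p+ab)(q+bc)(r+ca)=(aq+br+cp+abc)^2,
\]
which is a nonzero square since $AB,BC,CA$ external forces each of $p+ab,q+bc,r+ca$ to be nonzero. Lemma \ref{extviafield} further makes $p^2-a^2b^2$, $q^2-b^2c^2$, $r^2-c^2a^2$ each nonsquare, so the product $(p-ab)(q-bc)(r-ca)\cdot(p+ab)(q+bc)(r+ca)$ is nonsquare. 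Hence $[2(p-ab)(q-bc)(r-ca)]\cdot[2(p+ab)(q+bc)(r+ca)]=4(p^2-a^2b^2)(q^2-b^2c^2)(r^2-c^2a^2)$ is nonsquare, and since the second factor is a nonzero square, the first must fail to be a square, contradicting $(\ast)$.

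The main obstacle is locating and verifying the identity $2(p+ab)(q+bc)(r+ca)=(aq+br+cp+abc)^2$ modulo the tangent-plane relation, which is the engine driving the main case; once it is in hand the remainder is routine bookkeeping with square/nonsquare multiplicativity via Lemma \ref{extviafield}. The case $X\in\pi$ is comparatively soft because all four sign-choice planes collapse to $\pi$ itself, so Lemma \ref{conegeneral}(ii) immediately supplies the required external intersection point.
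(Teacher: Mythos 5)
Your proof is correct, and in the main case ($X\notin\langle A,B,C\rangle$) it is essentially the paper's argument with different sign bookkeeping: the paper chooses the signs in the three cone conditions so that the factors $\tr(\alpha)\pm 2a$ pair into the nonsquare discriminants $\tr(\alpha)^2-4a^2$, leaving $2(p-ab)(q-bc)(r+ca)$ to be shown a perfect square via the tangent-plane relation (it equals $(abc-aq+br-cp)^2$), whereas you take all plus signs so those factors become squares, obtain $2(p-ab)(q-bc)(r-ca)\in\square$, and then apply the tangent-plane identity to the complementary product $2(p+ab)(q+bc)(r+ca)=(aq+br+cp+abc)^2$ (which is the paper's identity under $b\mapsto -b$, and I have checked it holds modulo the relation of Lemma \ref{tangentplane}); multiplying the two then contradicts the nonsquareness of $p^2-a^2b^2$, $q^2-b^2c^2$, $r^2-c^2a^2$. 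Either route is equally routine once the perfect-square identity is verified. The one genuine difference is your separate case $X\in\pi$, handled via Lemma \ref{conegeneral}(ii): this is correct and is actually a point the paper's proof glosses over, since Corollary \ref{conecorollary} and Lemma \ref{coneimpliesexternal} require $X,A,B$ (etc.) to be noncollinear, which can fail when $X$ lies in the tangent plane $\pi$; in the paper this degenerate configuration never arises in the application (Theorem \ref{maintheoremextra} treats points collinear with $X$ separately), but for the lemma as literally stated your extra case makes the argument complete.
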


\begin{proof}
Write $X=\langle 1\rangle$, $A=\langle \alpha\rangle$, $B=\langle\beta\rangle$, $C=\langle \gamma\rangle$. 
By Lemma \ref{extviafield}, there exist $a,b,c\in\F_q$ such that $\tr(\alpha^2)=a^2$, $\tr(\beta^2)=b^2$, $\tr(\gamma^2)=c^2$.
We will suppose all of $\cone(X) \cap \cone(A) \cap \cone(B)$, $\cone(X) \cap \cone(B) \cap \cone(C)$, or
$\cone(X) \cap \cone(C) \cap \cone(A)$ are empty. So (by Proposition \ref{conecondition} and Corollary \ref{conecorollary}) we have:
\begin{align*}
&2 (\tr(\alpha) + 2 a) (\tr(\beta) + 2 b) (\tr(\alpha\beta) - a b) \in \square,\\
&2 (\tr(\beta) - 2 b) (\tr(\gamma) - 2 c) (\tr(\beta\gamma) - b c) \in \square,\\
&2 (\tr(\gamma) + 2 c) (\tr(\alpha) - 2 a) (\tr(\alpha\gamma) + a c) \in \square.  
\end{align*}
The product of these three expressions, divided by 4, is also a square:
\begin{equation}
2(\tr(\alpha)^2 -4a^2) (\tr(\beta)^2 - 4 b^2)(\tr(\gamma)^2 - 4 c^2) 
(\tr(\alpha\beta) - a b) (\tr(\beta\gamma) - b c)(\tr(\alpha\gamma) + a c) \in \square.
\label{tripleproduct}
\end{equation}
Since $ABC$ is a tangent plane, we also have (by Lemma \ref{tangentplane}),
\begin{equation}
2\tr(\alpha\beta)\tr(\beta\gamma)\tr(\gamma\alpha)=
-a^2b^2c^2+a^2\tr(\beta\gamma)^2+b^2\tr(\alpha\gamma)^2+
c^2\tr(\alpha\beta)^2. \label{tangentcondition}
\end{equation}
Substituting $2\tr(\alpha\beta)\tr(\beta\gamma)\tr(\gamma\alpha)$ as expressed in \eqref{tangentcondition} into
\eqref{tripleproduct} yields
\[
(\tr(\alpha)^2 -4a^2) (\tr(\beta)^2 - 4 b^2)(\tr(\gamma)^2 - 4 c^2) 
(a b c-a \tr(\beta\gamma)+b \tr(\gamma\alpha)-c \tr(\alpha\beta))^2\in\square
\]
and hence 
\[
(\tr(\alpha)^2 -4a^2) (\tr(\beta)^2 - 4 b^2)(\tr(\gamma)^2 - 4 c^2) 
\]
is a square -- a contradiction, since all three terms are nonsquare by Lemma \ref{extviafield}.
\end{proof}

We now prove Theorem \ref{maintheorem} that Bruen chains can be equivalently
formulated as $(q+1)/2$-cliques of $\Gamma_X$ and $\Delta_X$. The following theorem also gives more information about the cliques of smaller size contained in $\Gamma_X$ and $\Delta_X$.

\begin{theorem}\label{maintheoremextra}
All cliques of $\Gamma_X$ define caps. Furthermore, cliques of size $(q+1)/2$ of $\Gamma_X$ are in one-to-one correspondence with Bruen chains.
All cliques of $\Delta_X$ define caps or collinear point sets on a line through $X$. Furthermore cliques of size $(q+1)/2$ of $\Delta_X$ are in one-to-one correspondence with Bruen chains.    
\end{theorem}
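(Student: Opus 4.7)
The proof plan splits naturally into the four conclusions of Theorem \ref{maintheoremextra}: (i) cliques of $\Gamma_X$ are caps; (ii) cliques of $\Delta_X$ are caps or collinear point sets on a line through $X$; (iii) $(q+1)/2$-cliques of $\Gamma_X$ biject with Bruen chains; and (iv) the same for $\Delta_X$. For (i), I would suppose that three clique members $A, B, C$ are collinear; pairwise $\Gamma_X$-adjacency forces $X\notin AB$, so Lemma \ref{coneimpliesexternal} makes $AB$ external, and Lemma \ref{lemmacollinear} contradicts the clique condition. For (ii), the same argument settles the subcase in which the common line $\ell$ does not pass through $X$. When $\ell$ does pass through $X$, I would show that no clique-member $D$ lies off $\ell$ by examining the $2(q-1)$ external points $P\in\cone(X)\cap\cone(D)$: the adjacency of $D$ with each $A_i\in K\cap\ell$ forces the second intersection of $\cone(P)$ with $\ell$ to avoid every such $A_i$, while Proposition \ref{sameclass}(ii) rules out odd-type external points on $\ell$; this pins every $P$ inside a fixed plane $H$ through $X$, and the plane section $\cone(D)\cap H$, a conic of at most $q+1$ points (with the degenerate subcases handled separately), cannot accommodate $2(q-1)$ points once $q\geq 5$.

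For (iii), the direction from a Bruen chain to a $(q+1)/2$-clique is immediate from Corollary \ref{Bruenimpliescone} together with the cap property of a Bruen chain. Conversely, given a $(q+1)/2$-clique $K$, the set $S=\{X\}\cup K$ has size $(q+3)/2$, consists of pairs spanning external lines (by the definition of $\mathbb{V}$ and by Lemma \ref{coneimpliesexternal} for $K$-pairs), and is a cap by (i). The remaining Bruen-chain condition is that no three elements of $S$ lie in a tangent plane. For three $K$-elements, Lemma \ref{lemmanoncollinear} gives the contradiction. The key observation for a triple $X,A_i,A_j$ is that the four auxiliary planes $\langle 1, 2\alpha\pm a, 2\beta\pm b\rangle$ arising in the decomposition of Proposition \ref{conecondition} all coincide with $\langle X, A_i, A_j\rangle$, because $2\alpha\pm a\in\langle 1,\alpha\rangle$ and $2\beta\pm b\in\langle 1,\beta\rangle$; so tangency of the latter plane triggers Lemma \ref{conegeneral}(ii) in each of the four systems, producing a nonempty $\Omega$ and hence an external point of $\cone(X)\cap\cone(A_i)\cap\cone(A_j)$, which contradicts the clique.

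For (iv), a $(q+1)/2$-clique of $\Delta_X$ cannot be contained in a single line through $X$ (such cliques have size at most $(q-1)/2<(q+1)/2$), so by (ii) it is a cap. A further tangent-plane counting argument, analogous to the one in (ii), rules out pairs $A, B \in K$ with $X\in AB$, so the clique is in fact a $\Gamma_X$-clique and (iii) finishes the proof. I expect the main obstacle to be the geometric argument in (ii): pinning every $P\in\cone(X)\cap\cone(D)$ inside the fixed plane $H$, and then carefully bounding $|\cone(D)\cap H|$ in all (possibly degenerate) subcases, is the most delicate part of the proof.
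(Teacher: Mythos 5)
Your treatment of $\Gamma_X$ and of the clique--chain correspondence follows the paper's route: the cap property via Lemma \ref{coneimpliesexternal} combined with Lemma \ref{lemmacollinear}, the forward direction via Corollary \ref{Bruenimpliescone}, and the converse via Lemma \ref{lemmanoncollinear}. Your observation that for a triple $X,A,B$ the four auxiliary planes $\langle 1,2\alpha\pm a,2\beta\pm b\rangle$ from Proposition \ref{conecondition} all coincide with $\langle X,A,B\rangle$, so that tangency of that plane forces $|\Omega|=1$ by Lemma \ref{conegeneral}(ii) and hence a common external point of the three cones, is correct and is in fact a step the published proof passes over: the paper verifies the secant-plane condition of Lemma \ref{bruenPerp} only for triples inside the clique $\mathcal{K}'$ and is silent on triples through $X$. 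That addition is genuinely useful.

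The gap is in your handling of $\Delta_X$. For a clique containing points $A_1,\dots,A_k$ on a line $\ell$ through $X$ together with $D\notin\ell$, you correctly note that each of the $2(q-1)$ external points $P\in\cone(X)\cap\cone(D)$ meets $\ell$ in $X$ and at most one further (necessarily even) point avoiding every $A_i$. But this does not pin all such $P$ into one plane $H$: there remain roughly $(q-1)/2-k$ admissible second intersection points $Z$, and for each fixed $Z$ the locus of admissible $P$ is a subset of $\cone(X)\cap\cone(Z)$, which is an intersection of two quadratic cones and not a plane section; different $P$ may moreover use different $Z$. Only the degenerate case $\cone(P)\cap\ell=\{X\}$ (double contact at $X$, i.e. $\tr\bigl(p(4z-\tr(z))\bigr)=0$ in the field model) confines $P$ to a plane, so your count closes only when $k=(q-1)/2$, i.e. when the clique already contains every even point of $\ell\setminus\{X\}$, not when $k=3$. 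The paper instead handles this configuration by first deducing that $AD,BD,CD$ are external via Lemma \ref{coneimpliesexternal} and then invoking Lemma \ref{lemmacollinear} with $D$ in the role of the centre. The same problem infects your step (iv): the ``further tangent-plane counting argument'' meant to exclude a pair $A,B$ in a $(q+1)/2$-clique of $\Delta_X$ with $X\in AB$ is never supplied, and this two-points-on-a-line-through-$X$ configuration is exactly the one that neither Lemma \ref{lemmacollinear} (which needs three collinear points off $X$) nor your counting reaches; a concrete argument is required there before the $\Delta_X$ half of the theorem can be considered proved.
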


\begin{proof} 
We will use the symbol $\sim$ and $\not\sim$ for `adjacent' and `nonadjacent' (respectively), for two vertices of a graph, that will be apparent from the context.
We will first show first that Bruen chains give rise to cliques of $\Delta_X$ and $\Gamma_X$. So let $\mathcal{B}$ be a Bruen chain (seen as points external to $\quadric$). By Corollary \ref{Heden_main}, all points of $\mathcal{B}$ are of the same isometry type and we may suppose without loss of generality that all points are in $\even$, and that $X=\langle 1\rangle$ is in $\mathcal{B}$. The set $\mathcal{B}\setminus\{ X\}$ has size $\frac{q+1}{2}$ and by Lemma \ref{bruenPerp}, consists of points $Y$ such that $XY$ is an external line.  Hence, all points of $\mathcal{B}$ are in $\mathbb{V}$, that is, they are vertices of $\Gamma_X$ and $\Delta_X$. From Lemma \ref{bruenPerp}(ii) it follows that if $A,B\in \mathcal{B}$, then $AB$ is not incident with $X$. Furthermore,  by Corollary \ref{Bruenimpliescone}, $\cone(A)\cap\cone(B)\cap\cone(X)=\varnothing$ for all $A,B\in \mathcal{K}\setminus\{X\}$, so the points of $\mathcal{B}\setminus \{X\}$ form a clique in $\Gamma_X$ and in $\Delta_X$.

Now let $\mathcal{K}$ be a clique of $\Delta_X$ and let $A,B,C$ be collinear points of $\mathcal{K}$ on a line $\ell$ of $\PG(3,q)$ where $X$ does not lie on $\ell$. Since $A,B$ are vertices of $\Delta_X$, we have that $XA,XB$ are external lines. Furthermore, since and $A\sim B$, we then have that $\cone(X)\cap\cone(A)\cap\cone(B)=\varnothing$ so $\ell=AB$ is an external line by Lemma \ref{coneimpliesexternal}. However, by Lemma \ref{lemmacollinear}, we have that $A\not\sim B$, or $A\not\sim C$ or $B\not\sim C$, a contradiction. We conclude that it is impossible that three collinear points $A,B,C$ on a line not through $X$ are contained in a clique $\mathcal{K}$ of $\Delta_X$.
Since $\Gamma_X$ is a subgraph of $\Delta_X$, the same conclusion holds for $\Gamma_X$: three collinear points $A,B,C$ on a line not through $X$ cannot be contained in a clique of $\Gamma_X$. Furthermore, since $\Gamma_X$ does not contain vertices $A,B$ collinear with $X$, we find that a clique of $\Gamma_X$ is necessarily a cap.
Now suppose that a clique of $\Delta_X$ contains a collinear subset $A,B,C$ on a line $\ell$ through $X$, and some point $D\notin \ell$. Since $A$ and $D$ are vertices of $\Delta_X$ we have that $\ell=XA$ and $XD$ are external lines. Furthermore, since $A\sim D,B\sim D,C\sim D$, Lemma \ref{coneimpliesexternal} shows that $AD,BD,CD$ are external lines. However, then Lemma \ref{lemmacollinear} again gives a contradiction.
This proves that every clique of $\Delta_X$ is either a set of points, all collinear on a line through $X$, or a cap.
Finally, let $\mathcal{K}'$ be a clique of $\Delta_X$ of size $\frac{q+1}{2}$. Then it is impossible for all points of $\mathcal{K}'$ to be collinear on a line $\ell$ through $X$ since every point of $\mathcal{K}'$ is an even point on $\ell$, distinct from $X$, and there are only $\frac{q-1}{2}$ such points on $\ell$ (the external line $\ell$ contains precisely $\frac{q+1}{2}$ points of $\mathbb{E}$, one of which is $X$).
This shows that the clique $\mathcal{K}'$ is a cap and it follows that $\mathcal{K}'$ is a clique of $\Gamma_X$ as well.
We will show that $\mathcal{K}'$ is a Bruen chain by showing that the points of $\mathcal{K}'$ satisfy the conditions of Lemma \ref{bruenPerp}.
First, by Lemma \ref{coneimpliesexternal}, $AB$ is an external line for each 
$A,B\in\mathcal{K}$, so it remains to show that $ABC$ is a secant plane, for each $A,B,C\in\mathcal{K}$. We have shown already that the points of $\mathcal{K}'$ form a cap, so $A,B,C$ indeed span a plane, which is not a tangent plane by Lemma \ref{lemmanoncollinear}.
Therefore, $\mathcal{K}'\cup \{X\}$ is a Bruen chain.
\end{proof}

\begin{cor}
A Bruen chain has an algebraic characterisation within the finite field $\F_{q^4}$.
In particular, it is equivalent to a set of $(q+1)/2$ elements $S$ of 
$\F_{q^4}\backslash \F_q$ such that for all $\alpha,\beta\in S$:
\begin{enumerate}[(i)]
\item $(\alpha^q-\alpha)/(\beta^q-\beta) \notin \F_q$;
\item $\tr(\alpha)^2-4a^2$ is a nonsquare;
\item $2(\tr(\alpha)+2a)(\tr(\beta)+2b)(\tr(\alpha\beta)-ab)$ is a nonzero square;
\end{enumerate}
where $a$ and $b$ are elements of $\F_q$ such that $\tr(\alpha^2)=a^2$ and $\tr(\beta^2)=b^2$.
\end{cor}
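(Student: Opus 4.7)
The plan is to apply Theorem~\ref{maintheoremextra}---which identifies Bruen chains containing $X=\langle 1\rangle$ with $(q+1)/2$-cliques of $\Gamma_X$---and then translate the two graph-theoretic data of $\Gamma_X$ (being a vertex, being an edge) into the three finite-field conditions using Lemma~\ref{extviafield} and Corollary~\ref{conecorollary}.

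First I would reduce to the case $X=\langle 1\rangle \in \mathcal{B}$. By Corollary~\ref{Heden_main} all elements of a Bruen chain $\mathcal{B}$ share an isometry type, and the remark preceding Section~\ref{sect:model} lets us assume they all lie in $\even$; since the isometry group acts transitively on $\even$, we may further assume $X\in\mathcal{B}$. I would then take $S\subseteq\F_{q^4}^*$ to be any set of representatives for $\mathcal{B}\setminus\{X\}$, so $|S|=(q+1)/2$, and observe that each of (i)--(iii) is invariant under the scaling $\alpha\mapsto\lambda\alpha$ with $\lambda\in\F_q^*$ (by homogeneity in $\alpha$ and in $\beta$), so the conditions descend to the underlying points.

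I would then match the vertex and edge conditions of $\Gamma_X$ against (i)--(iii). For (ii): the implicit existence of $a\in\F_q$ with $\tr(\alpha^2)=a^2$, combined with the nonsquareness of $\tr(\alpha)^2-4a^2$, forces $a\neq 0$ (otherwise the expression is a square), so $\langle\alpha\rangle\in\even$; by Lemma~\ref{extviafield} the nonsquare condition is precisely that $XA$ is an external line, and $\alpha\in\F_{q^4}\setminus\F_q$ records $A\neq X$. For (i): a direct manipulation shows that $(\alpha^q-\alpha)/(\beta^q-\beta)\in\F_q$ if and only if there exists $\lambda\in\F_q$ with $\alpha-\lambda\beta\in\F_q$, if and only if $1,\alpha,\beta$ are $\F_q$-dependent, if and only if $X,A,B$ are collinear; so (i) encodes $X\notin AB$ (together with $\alpha,\beta\notin\F_q$).

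Finally, (iii) encodes the cone condition $\cone(X)\cap\cone(A)\cap\cone(B)=\varnothing$. In the forward direction, Corollary~\ref{Bruenimpliescone} gives cone emptiness and Lemma~\ref{bruenPerp}(i) gives $AB$ external, whereupon Corollary~\ref{conecorollary} delivers (iii). In the reverse direction, (i) and (ii) produce vertices of $\Gamma_X$ and the non-incidence $X\notin AB$; the product-of-expressions identities in the proof of Corollary~\ref{conecorollary}, fed by the nonsquare factors $\tr(\alpha)^2-4a^2$ and $\tr(\beta)^2-4b^2$ supplied by (ii), propagate the single square-condition (iii) to all four sign-twisted expressions $-\tr(f(2\alpha\pm a,2\beta\pm b)^2)$, simultaneously forcing $AB$ external (via Lemma~\ref{coneimpliesexternal}) and the cone-emptiness of Proposition~\ref{conecondition}. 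Theorem~\ref{maintheoremextra} then exhibits $S\cup\{X\}$ as a Bruen chain. I expect this last bridge to be the main obstacle: converting one square-condition into the full cone-emptiness statement (which a priori demands nonsquareness of four twisted discriminants) without circularly presuming that $AB$ is already external, and verifying that the sign choice in (iii) is harmless under the constraints imposed by (ii).
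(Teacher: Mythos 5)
Your route is the paper's own: invoke Theorem~\ref{maintheoremextra} and translate the vertex and edge conditions of $\Gamma_X$ through Lemma~\ref{extviafield}, Corollary~\ref{Bruenimpliescone} and Corollary~\ref{conecorollary}. The forward direction and your treatments of (i) and (ii) are correct and match the paper (your Frobenius reformulation of noncollinearity is a detail the paper leaves implicit). The step you yourself flag as ``the main obstacle'' is, however, a genuine gap, and the mechanism you propose for closing it does not work. Write $E_1,\dots,E_4$ for the four expressions listed in the proof of Corollary~\ref{conecorollary}, in that order, so that $E_1$ is the quantity in condition~(iii); by Proposition~\ref{conecondition} and Lemma~\ref{conegeneral}, cone-emptiness is equivalent to all four being nonzero squares. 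The product $E_1E_3=4(\tr(\alpha)^2-4a^2)(\tr(\beta)^2-4b^2)(\tr(\alpha\beta)-ab)^2$ is a nonzero square by (ii) and (iii), so $E_1$ does propagate to $E_3$. But $E_1E_2$ and $E_1E_4$ each carry the factor $\tr(\alpha\beta)^2-a^2b^2$, whose quadratic character is precisely the ``$AB$ is external'' statement you are trying to derive; the propagation is circular exactly where you feared. Concretely, nothing in (i)--(iii) with a single choice of the roots $a,b$ excludes the configuration where $A$ and $B$ are even points on a common tangent line with $XA$, $XB$ external: there $\tr(\alpha\beta)=-ab$ for a suitable choice of signs, so $E_2=E_4=0$ and Lemma~\ref{conegeneral} forces $\cone(X)\cap\cone(A)\cap\cone(B)$ to contain an external point, while $E_1=-4ab(\tr(\alpha)+2a)(\tr(\beta)+2b)$ can perfectly well be a nonzero square. (A secant line $AB$ gives a similar failure, with $E_1E_2$ a nonsquare.)

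The repair is to read (iii) as holding for \emph{every} choice of the square roots $a$ and $b$ --- equivalently, to require all four $E_i$ to be nonzero squares, which is exactly what the GAP code in Appendix~B tests. Under that reading $\tr(\alpha\beta)=\pm ab$ is excluded, Lemma~\ref{coneimpliesexternal} yields that $AB$ is external, and Corollary~\ref{conecorollary} then applies legitimately. You should be aware that the paper's own proof of the reverse implication also invokes Corollary~\ref{conecorollary} without first verifying its hypothesis that $AB$ is external, so your suspicion was well placed; but ``the product identities propagate the single square condition to all four'' is an assertion rather than an argument, and taken literally it is false.
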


\begin{proof} 
Throughout this proof, let $A=\langle \alpha\rangle$ and $B=\langle\beta\rangle$ where $\alpha,\beta\in S$.

Theorem \ref{maintheoremextra} shows that a Bruen chain is equivalent to a clique of $\Gamma_X$ of size $\frac{q+1}{2}$. 
Now let $S$ be a set of $\frac{q+1}{2}$ field elements of $\F_{q^4}\setminus \F_q$ and suppose that $S$ defines a clique of $\Gamma_X$, then every element $\alpha$ of $S$ satisfies  (ii): $\tr(\alpha^2)=a^2$ where $\tr(\alpha^2)-4a^2$ is a nonsquare since $A$ must be an even point such that $XA$ is an external line. Furthermore, if $\alpha,\beta\in S$, then $A,B$ and $X=\langle 1\rangle$ need to be noncollinear, which is equivalent to the condition that $1,\alpha,\beta$ are $\F_q$-independent, which is expressed in (i). 
If $\alpha,\beta\in S$ then $\cone(A)\cap\cone(B)\cap\cone(X)$ does not have points outside $\quadric$. Since $XA$ and $XB$ are external lines for all $A,B$ where $A=\langle \alpha\rangle$, $B=\langle \beta\rangle$ with $\alpha,\beta\in S$, Lemma \ref{coneimpliesexternal} shows that $AB$ is an external line, and then by Lemma \ref{conecorollary}, $\cone(A)\cap\cone(B)\cap\cone(X)$ having no external points shows that (iii) holds.

Vice versa, if $S$ is a set of $\frac{q+1}{2}$  elements satisfying $(i)-(ii)-(iii)$ then it is clear that for all points $A=\langle \alpha\rangle$, $B=\langle \beta\rangle$ with $\alpha,\beta$ in $S$ we have that $A,B$ are even points spanning an external line with $X$ by $(ii)$. By (i), $1,A,B$ are not collinear, which shows by Corollary \ref{conecorollary} that (iii) implies  that $\cone(A)\cap\cone(B)\cap\cone(X)$ has no points outside $\quadric$.
\end{proof}

For $41\leqslant q\leqslant \largestnumber$, every $(q-1)/2$-clique of $\Delta_X$ is contained in an external line through $X$, and so Bruen chains do not exist for these values of $q$; and this is our statement for Theorem \ref{computationalresult}.
Using the finite field model above, we could efficiently construct the graph
$\Gamma_X$ in $\textsc{GAP}$ using the package \textsc{Grape} \cite{Grape}.
(We provide this code in Appendix B.)
Without the finite field model, this is memory intensive
and time consuming. We then made sure the graph was equipped with the automorphisms
arising by taking the stabiliser of $X$ in the isometry group of $\quadric$. We were able to compute the clique numbers using the methods described in the following section.

\section{Computational results}

First we list (in Table \ref{tbl:known}) the known examples and their stabilisers. There are 2 examples each for $q\in\{9,25\}$ if we take the isometry group, but they are equivalent under semi-similarities.

\begin{table}[!ht]\footnotesize
\begin{tabular}{lclc}
\toprule
$q$ & Number of Bruen chains & Stabilisers & Example reference\\
\midrule
5 &1& $C_2\times S_4$&\cite{Bruen1978} \\
7 &2& $C_2\times S_4$, $S_5$&\cite{Bruen1978,Korchmaros81}\\
9 &1 & $C_2\times S_4$ &\cite{Abatangelo, HedenSaggese}\\
11 & 2 & $D_{20}$, $C_2\times A_5$&\cite{Korchmaros81}\\
13 & 2 & $S_4$, $C_2\times C_2\times S_3$ &\cite{Heden95}\\
17 & 2 & $D_{12}$, $D_{16}$ &\cite{Heden95}\\
19 & 2 & $D_{12}$, $S_4$ &\cite{Heden95}\\
23 & 1 & $D_8$ &\cite{Heden95}\\
25 & 1 & $D_{12}$ &\cite{HedenSaggese}\\
27 & 1 & $C_3\times A_4$ &\cite{HedenSaggese}\\
31 & 1 & $S_4$ &\cite{Heden95}\\
37 & 1 & $C_5 : C_4$ &\cite{Heden98}\\
\bottomrule \\
\end{tabular}
\caption{Known examples and their stabilisers.}\label{tbl:known}
\end{table}

\vspace{-2ex}
Next, we list clique numbers of $\Gamma_X$ in Table \ref{tbl:cliquenumbers}, which also shows that no Bruen chain
exists for $41\le q\le \largestnumber$.
We plot the clique numbers of $\Gamma_X$ against $q$ for $5 \leqslant q \leqslant 97$ in Figure \ref{plot_clique_numbers}. The initial linear trend quite clearly changes once $q$ is sufficiently large, adding evidence to the conjectures of \cite{Heden98,HedenSaggese} that there are no further Bruen chains for $q>37$.

To determine the clique numbers for each $\Gamma_X$, the graph was constructed using GAP and its
package \textsc{Grape} \cite{Grape}. The \emph{Orderly Algorithm} \cite{orderly} implemented with the the 
\emph{MinimalImage}
function of the GAP package \textsc{images} \cite{images1.3.1} was employed to produce starter sets of size $1$ or $2$ for possible cliques.
We exported each induced subgraph of $\Gamma_X$ 
 to a DIMACS format text file,
and we then ran a C++ program that uses the \emph{MaxCliqueDyn} algorithm, known
as \texttt{mcqd} \cite{mcqd}. For $89 \leq q \leq 97$ a parallel implementation \cite{mcqdpara} was used on a server with 64 cores. For $q=89$, it took approximately two weeks using \cite{mcqd} and approximately $8$ days using \cite{mcqdpara}. It took approximately $3$ weeks to complete $q=97$ using \cite{mcqdpara}.

For small values of $q$, we were also able to determine the clique number directly using the clique algorithm provided by \textsc{GRAPE}. In the case of existence, the \textsc{GRAPE} functionality coupled with starter sets was sufficient for enumeration of the Bruen chains.

In Lemma \ref{coclique2}, we showed that the coclique number of $\Delta_X$ is $q(q-1)/2$, and so the clique number of $\Gamma_X$ must be at least as large, since it is a spanning subgraph. For $q=29$ we computed the coclique number and determined it to be exactly $q(q-1)/2 = 406$. We had hoped that it might exceed this value and, if sufficiently large, that it might be shown to prohibit the existence of a Bruen chain. Since this approach seemed unpromising and given it is computationally difficult to determine the coclique number for $\Gamma_X$, we did not attempt other values of $q$.

\begin{table}[!ht]\footnotesize
\begin{tabular}{ll|ll|ll|ll|ll|ll}
\toprule
$q$ & $\omega(\Gamma_X)$ &
$q$ & $\omega(\Gamma_X)$ &
$q$ & $\omega(\Gamma_X)$ &
$q$ & $\omega(\Gamma_X)$ &
$q$ & $\omega(\Gamma_X)$ &
$q$ & $\omega(\Gamma_X)$\\
\midrule
 29 & 9 & 41 & 13 & 49 & 13 & 61 & 15 & 73 & 15 &  83 & 15\\
 31 & 16 &  43 & 12 & 53 & 13 & 67 & 14 & 79 & 16 &  89 & 15 \\
 37 & 19 & 47 & 12 &  59 & 15 & 71 & 15 & 81 & 15  & 97 & 17\\
 \bottomrule \\
\end{tabular}
\caption{Clique numbers of $\Gamma_X$, for $29\le q\le \largestnumber$.}\label{tbl:cliquenumbers}
\end{table}

\begin{figure}[!ht]
\centering
\begin{tikzpicture}[scale=0.75]
\begin{axis}[width=.8\textwidth,xlabel=$q$,ylabel=$\omega(\Gamma_X)$, 
    xmin = 3, xmax=100, xtick distance=5, ymin=2, ymax=20, ytick distance = 2,
	yticklabel style={
            /pgf/number format/fixed,
            /pgf/number format/precision=0,
            /pgf/number format/fixed zerofill
        }]
  \addplot [color=blue, 
           only marks, 
           mark=*, 
           mark size=1.5pt]
coordinates {
(5, 3)
(7, 4)
(9, 5)
(11, 6)
(13, 7)
(17, 9)
(19, 10)
(23, 12)
(25, 13)
(27, 14)
(29, 9)
(31, 16)
(37, 19)
(41, 13)
(43, 12)
(47, 12)
(49, 13)
(53, 13)
(59, 15)
(61, 15)
(67, 14)
(71, 15)
(73, 15)
(79, 16)
(81, 15)
(83, 15)
(89, 15)
(97, 17)
 };
\end{axis}
\end{tikzpicture}\vspace{-2ex}
\caption{Clique numbers of $\Gamma_X$ for $5\le q\le \largestnumber$.} \label{plot_clique_numbers}
\end{figure}

\section*{Acknowledgements}
We would like to thank Gordon Royle for his valuable input. The first and second authors were supported by the Australian Research Council Discovery Grant DP200101951 and the third author was supported by the Marsden Fund Council administered by the Royal Society of New Zealand.

\section*{Appendix A: Representatives of known Bruen chains}

For each of the values of $q\le 37$, we give the complete set of Bruen chains up to equivalence.
We will use the finite field model presented in Section \ref{sect:model}.
Each list of integers (Table \ref{tbl:allBruen}) is a set of exponents of the primitive element of $\mathbb{F}_q$,
which in GAP notation, is \texttt{Z(q\^{}4)}. A $\star$ indicates that under the full collineation group,
we can disregard the extra example because it is equivalent under semi-similarities to another. Each example begins with `0'; so for larger $q$, some lines of the table need to carry-over, so one can delineate the examples
by knowing this.

\vspace{-1ex}
\begin{table}[!ht]
{\scriptsize
\begin{tabular}{ll}
\toprule 
$q$ & Bruen chains (up to isometry) \\
\midrule
5 & 0, 182, 305, 605\\
7 & 0, 234, 1397, 1640, 1818\\
&  0, 774, 1126, 1818, 2282 \\
9 & 0, 44, 114, 3669, 6035, 6222\\
 & 0, 77, 2613, 4396, 4447, 6035 $\star$\\
11 & 0, 666, 1091, 6507, 10658, 11079, 13791\\
& 0, 1621, 4635, 5091, 8026, 8790, 9359 \\
13 &  0, 1201, 3974, 9088, 9209, 20275, 25144, 26421 \\
&  0, 9088, 9209, 9712, 15691, 20275, 25061, 27005 \\ 
17 & 0, 11064, 23365, 44028, 51732, 54288, 55375, 74434, 75024, 81544\\ 
& 0, 8573, 17348, 40937, 42498, 44028, 65239, 69987, 72812, 76936\\ 
19 & 0, 2358, 17832, 30167, 50702, 53413, 57124, 63144, 81317, 103363, 126708\\
& 0, 4998, 34061, 39278, 51323, 51469, 74971, 103689, 104039, 113312, 126708\\
23 & 0, 40351, 50476, 65922, 141369, 144680, 151517, 179361, 193641, 196170, 
  226383, 250126, 258931 \\ 
25 & 0, 7765, 21365, 51750, 52028, 60937, 179761, 187904, 222287, 252075, 
  263033, 272801, 277793, 360244\\ 
& 0, 44967, 119676, 125676, 169013, 178576, 187904, 276812, 292945, 305987, 
  356702, 370842, 380493, 384441 $\star$\\
27 & 0, 15789, 29254, 40015, 41748, 43717, 63057, 70717, 173045, 255719, 278951, 
  281658, 335753, 430538, 493601\\
31 & 0, 6510, 12459, 35613, 282883, 288878, 298214, 398628, 405647, 519521, 
  570613, 588163, 631263, 769878,\\ 
  &\quad  848886, 900936, 906588 \\
37 & 0, 91942, 225190, 305966, 396584, 452161, 490392, 495825, 594231, 735337, 
  805164, 849785, 858281, 1137850,\\ 
  & \quad   1188868, 1259696, 1287313, 1325949, 1476150, 1846082\\
\bottomrule \\
\end{tabular}
}
\caption{All Bruen chains represented as elements of $\mathbb{F}_{q^4}$, for $q\le 37$.}\label{tbl:allBruen}
\end{table}

\newpage
\section*{Appendix B: GAP code}

\begin{lstlisting}
LoadPackage("fining");

# Construct \Gamma_X for a given value of q
Gamma_X := function(q)

  local capcondition, conecondition, vec, basis, tr, gram, form, eq, squares, nonsquares, x, ext, ext_field, g, permgroup, permgroup1, good, gamma, permgroup2, perm, gens, aut;

  conecondition := function(alpha, beta, q)
    local tr, tra, trb, a, b, f, sq;
    if ForAny(GF(q), t -> beta - t * alpha in GF(q)) then
      return true;
    fi;
    tr := x -> x+x^q+x^(q^2)+x^(q^3);;
    tra := tr(alpha^2);
    trb := tr(beta^2);
    a := First(GF(q), t -> t^2 = tra);
    b := First(GF(q), t -> t^2 = trb);
    f := {x,y} -> tr(y) * x - tr(x) * y;
    sq := AsList(Group(Z(q)^2));;
    return ForAll(Cartesian([2*alpha+a,2*alpha-a],[2*beta+b,2*beta-b]), t -> -tr(f(t[1],t[2])^2) in sq );
  end;

  capcondition := {b,c} -> not (ext_field[b]^q-ext_field[b])/(ext_field[c]^q-ext_field[c]) in GF(q);

  vec := AsVectorSpace(GF(q), GF(q^4));;
  basis := Basis(vec);;
  tr := x -> x+x^q+x^(q^2)+x^(q^3);;
  gram := List(basis, t -> List(basis, u -> tr(t*u)));;
  ConvertToMatrixRep(gram,GF(q));
  form := BilinearFormByMatrix(gram,GF(q));;
  eq := PolarSpace(form);;
  squares := Union(AsList(Group(Z(q)^2)), [0*Z(q)]);;
  nonsquares := Difference(AsList(GF(q)),squares);;
  x := VectorSpaceToElement(PG(3,q),[1,0,0,0]*Z(q)^0);
  ext := AsList(FiningOrbit(IsometryGroup(eq),x));;
  ext_field := List(ext, t -> basis * Unpack(t!.obj));;
  g := IsometryGroup(eq);;
  permgroup := Action(g, ext);
  permgroup1 := Stabiliser(permgroup,1);
  good := Filtered([2..Size(ext)], t -> tr(ext_field[t])^2-4*tr(ext_field[t]^2) in nonsquares );;

  gamma:=Graph(permgroup1,good,OnPoints,{b,c} -> b<>c and capcondition(b,c) and conecondition(ext_field[b],ext_field[c],q));;

  permgroup2 := Action(permgroup1, good);
  perm:=PermListList(VertexNames(gamma), good);
  gens:=GeneratorsOfGroup(permgroup2);
  gens:=List(gens, t -> t^perm);
  aut:=Group(gens);
  gamma!.autGroup := aut;

  return gamma;
end;

# Construct \Delta_X for a given value of q
Delta_X := function(q)

  local conecondition, vec, basis, tr, gram, form, eq, squares, nonsquares, x, ext, ext_field, g, permgroup, permgroup1, good, gamma, permgroup2, perm, gens, aut;

  conecondition := function(alpha, beta, q)
    local tr, tra, trb, a, b, f, sq;
    if ForAny(GF(q), t -> beta - t * alpha in GF(q)) then
      return true;
    fi;
    tr := x -> x+x^q+x^(q^2)+x^(q^3);;
    tra := tr(alpha^2);
    trb := tr(beta^2);
    a := First(GF(q), t -> t^2 = tra);
    b := First(GF(q), t -> t^2 = trb);
    f := {x,y} -> tr(y) * x - tr(x) * y;
    sq := AsList(Group(Z(q)^2));;
    return ForAll(Cartesian([2*alpha+a,2*alpha-a],[2*beta+b,2*beta-b]), t -> -tr(f(t[1],t[2])^2) in sq );
  end;

  vec := AsVectorSpace(GF(q), GF(q^4));;
  basis := Basis(vec);;
  tr := x -> x+x^q+x^(q^2)+x^(q^3);;
  gram := List(basis, t -> List(basis, u -> tr(t*u)));;
  ConvertToMatrixRep(gram,GF(q));
  form := BilinearFormByMatrix(gram,GF(q));;
  eq := PolarSpace(form);;
  squares := Union(AsList(Group(Z(q)^2)), [0*Z(q)]);;
  nonsquares := Difference(AsList(GF(q)),squares);;
  x := VectorSpaceToElement(PG(3,q),[1,0,0,0]*Z(q)^0);
  ext := AsList(FiningOrbit(IsometryGroup(eq),x));;
  ext_field := List(ext, t -> basis * Unpack(t!.obj));;
  g := IsometryGroup(eq);;
  permgroup := Action(g, ext);
  permgroup1 := Stabiliser(permgroup,1);
  good := Filtered([2..Size(ext)], t -> tr(ext_field[t])^2-4*tr(ext_field[t]^2) in nonsquares );;

  gamma:=Graph(permgroup1,good,OnPoints,{b,c} -> b<>c and conecondition(ext_field[b],ext_field[c],q));;

  permgroup2 := Action(permgroup1, good);
  perm:=PermListList(VertexNames(gamma), good);
  gens:=GeneratorsOfGroup(permgroup2);
  gens:=List(gens, t -> t^perm);
  aut:=Group(gens);
  gamma!.autGroup := aut;

  return gamma;
end;
\end{lstlisting}

\bibliographystyle{abbrv}
\bibliography{references}

\end{document}